\theoremstyle{plain}
\newtheorem{thm}{Theorem}[section]
\newtheorem{prop}[thm]{Proposition}
\newtheorem{lem}[thm]{Lemma}
\newtheorem{cor}[thm]{Corollary}
\theoremstyle{definition}
\newtheorem{defn}{Definition}
\theoremstyle{remark}
\def\B{{\textrm{B}}}
\def\denom{{\textrm{q}}}
\def\bE{{\mathbb E}}
\def\bphi{{\bar \phi}}
\def\cG{{\mathcal G}}
\def\N{{\mathbb N}}
\def\R{{\mathbb R}}
\def\sym{{\textrm{Sym}}}
\def\weight{\mathcal{W}}
\def\chix{{\raise.5ex\hbox{$\chi$}}}
\def\Z{{\mathbb Z}}
\begin{document}
\title{The ergodic theory of free group actions:\\ entropy and the $f$-invariant}
\author{Lewis Bowen \\ University of Hawai'i, M\=anoa\\ lpbowen@math.hawaii.edu}
\begin{abstract}
Previous work introduced two measure-conjugacy invariants: the $f$-invariant (for actions of free groups) and $\Sigma$-entropy (for actions of sofic groups). The purpose of this paper is to show that the $f$-invariant is essentially a special case of $\Sigma$-entropy. There are two applications: the $f$-invariant is invariant under group automorphisms and there is a uniform lower bound on the $f$-invariant of a factor in terms of the original system. 
\end{abstract}
\maketitle
\noindent
{\bf Keywords}: free groups, entropy, $f$-invariant.\\
{\bf MSC}:37A35\\

\noindent

\section{Introduction}
The paper [Bo08b] introduced a measure-conjugacy invariant, called $\Sigma$-entropy, for measure-preserving actions of a sofic group. This was applied, for example, to classify Bernoulli shifts over an arbitrary countable linear group. Previously, [Bo08a] introduced the $f$-invariant for measure-preserving actions of free groups. The invariants of both papers have strong analogies with classical Kolmogorov-Sinai entropy. The purpose of this paper is to show that the $f$-invariant is essentially a special case of $\Sigma$-entropy. We apply this result to show the $f$-invariant does not change under group automorphisms and that there is a lower bound on the $f$-invariant of a factor in terms of the $f$-invariant of the system. The introductions to [Bo08a-b] provide further background and motivation for $\Sigma$-entropy and the $f$-invariant.



To define $\Sigma$-entropy precisely, let $G$ be a countable group and let $\Sigma=\{\sigma_i\}_{i=1}^\infty$ be a sequence of homomorphisms $\sigma_i:G \to \sym(m_i)$ where $\sym(m_i)$ denotes the full symmetric group of the set $\{1,\ldots,m_i\}$. $\Sigma$ is {\em asymptotically free} if for every pair $g_1,g_2\in G$ with $g_1\ne g_2$
$$\lim_{i\to\infty} \frac{\big| \{ 1\le j \le m_i ~:~ \sigma_i(g_1)j=\sigma_i(g_2)j\} \big| }{m_i} = 0.$$
 The treatment of $\Sigma$-entropy given next differs from [Bo08b] in two respects: for simplicity, we assume that each $\sigma_i$ is a homomorphism and we use observables rather than partitions to define it.

We will write $G \curvearrowright^T (X,\mu)$ to mean $(X,\mu)$ is a standard probability measure space and $T=(T_g)_{g\in G}$ is an action of $G$ on $(X,\mu)$ by m.p. (measure preserving) transformations. This means that for each $g\in G$, $T_g:X \to X$ is a m.p. transformation and $T_{g_1}T_{g_2}=T_{g_1g_2}$. An {\em observable} of $(X,\mu)$ is a measurable map $\phi:X \to A$ where $A$ is a finite or countably infinite set. We will say that $\phi$ is {\em finite} if $A$ is finite. Roughly speaking, the $\Sigma$-entropy rate of $\phi$ is the exponential rate of growth of the number of observables $\psi:\{1,\ldots,m_i\} \to A$ that approximate $\phi$. In order to make precise what it means to approximate, we need to introduce some definitions.


If $\phi: X \to A$ and $\psi: X \to B$ are two observables, then the {\em join} of $\phi$ and $\psi$ is the observable $\phi \vee \psi: X \to A \times B$ defined by $\phi \vee \psi(x)=\big( \phi(x),\psi(x)\big)$. If $g\in G$ then $T_{g}\phi: X\to A$ is defined by $T_{g}\phi(x)=\phi(T_gx)$.  If $H \subset G$ is finite, then let $\phi^H := \bigvee_{h \in H} T_{h}\phi.$ $\phi^H$ maps $X$ into $A^H$, the direct product of $|H|$ copies of $A$. Let $\phi^H_*\mu$ denote the pushforward of $\mu$ on $A^H$. In other words, $\phi^H_*(\mu)(S)=\mu\big((\phi^H)^{-1}(S)\big)$ for $S \subset A^H$. 

For each $i$, let $\zeta_i$ denote the uniform probability measure on $\{1,\ldots,m_i\}$. If $\psi:\{1,\ldots,m_i\} \to A$ is an observable and $H\subset G$ then let $\psi^H:= \bigvee_{h \in H} \sigma_i(h)\psi$ where $\sigma_i(h)\psi:\{1,\ldots, m_i\} \to A$ is defined by  $\sigma_i(h)\psi(j)=\psi(\sigma_i(h)j)$. Of course, $\psi^H$ depends on $\sigma_i$ but, to keep the notation simple, we will leave this dependence implicit. Let $\psi^H_*\zeta_i$ be the pushforward of $\zeta_i$ on $A^H$. Finally, let $d^H_{\sigma_i}(\phi,\psi)$ be the $l^1$-distance between $\phi^H_*\mu$ and $\psi^H_*\zeta_i$. In other words,
$$d^H_{\sigma_i}(\phi,\psi) = \sum_{a \in A^H} \big| \phi^H_*\mu(a) - \psi^H_*\zeta_i(a) \big|.$$

\begin{defn}
If $\phi:X \to A$ is an observable and $A$ is finite then define the {\em $\Sigma$-entropy rate} of $\phi$ by
$$h(\Sigma,T,\phi):=\inf_{H \subset G} \inf_{\epsilon>0} \limsup_{i\to\infty} \frac{1}{m_i}\log\Big( \big| \{\psi:\{1,\ldots,m_i\}\to A~:~d^H_{\sigma_i}(\phi,\psi)\le \epsilon\}\big|\Big).$$
The first infimum above is over all finite subsets $H\subset G$.
\end{defn}
\begin{defn}
Define the {\em entropy} of $\phi$ by
$$H(\phi):=-\sum_{a \in A} \mu\big(\phi^{-1}(a)\big) \log \Big(\mu\big(\phi^{-1}(a)\big)\Big).$$
\end{defn}
\begin{defn}\label{defn:infinite}
If $\phi:X \to A$ is an observable and $A$ is countably infinite then let $\pi_n:A \to A_n$ be a sequence of maps such that
\begin{enumerate}
\item $A_n$ is a finite set for all $n$;
\item for each $i>j$ there is a map $\pi_{ij}:A_i \to A_j$ such that $\pi_j = \pi_{ij} \circ \pi_i$;
\item $\pi_n$ is asymptotically injective in the sense that for all $a,b \in A$ with $a\ne b$ there exists $N$ such that $n>N$ implies $\pi_n(a) \ne \pi_n(b)$.
\end{enumerate}
Now define
$$h(\Sigma,T,\phi):= \lim_{n\to\infty} h(\Sigma,T,\pi_n \circ \phi).$$
In [Bo08b] it is proven that if $H(\phi)<\infty$ then this limit exists and is independent of the choice of sequence $\{\pi_n\}$.
\end{defn}

An observable $\phi$ is {\em generating} if the smallest $G$-invariant $\sigma$-algebra on $X$ that contains $\{\phi^{-1}(a)\}_{a\in A}$ is equal to the $\sigma$-algebra of all measurable sets up to sets of measure zero. The next theorem is (part of) the main result of [Bo08b]. 

\begin{thm}\label{thm:Bo08b}
Let $\Sigma=\{\sigma_i\}$ be an asymptotically free sequence of homomorphisms $\sigma_i:G \to \sym(m_i)$ for a group $G$. Let $G \curvearrowright^T (X,\mu)$. If $\phi_1$ and $\phi_2$ are two finite-entropy generating observables then $h(\Sigma,T, \phi_1)=h(\Sigma,T, \phi_2)$. 
\end{thm}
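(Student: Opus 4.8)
The plan is to reduce Theorem~\ref{thm:Bo08b} to the assertion that $h(\Sigma,T,\phi\vee\phi')=h(\Sigma,T,\phi)$ whenever $\phi,\phi'$ are finite observables and $\phi'$ is measurable with respect to the smallest $G$-invariant $\sigma$-algebra containing $\{\phi^{-1}(a)\}_{a}$. Granting this, the theorem follows at once: since $\phi_1$ is generating, $\phi_2$ is measurable over the $G$-invariant algebra generated by $\phi_1$, so $h(\Sigma,T,\phi_1\vee\phi_2)=h(\Sigma,T,\phi_1)$, and symmetrically $h(\Sigma,T,\phi_1\vee\phi_2)=h(\Sigma,T,\phi_2)$, whence the two coincide. (The reduction of the general finite-entropy statement to finite-range observables is via the truncation procedure of Definition~\ref{defn:infinite}, using $H(\phi_1\vee\phi_2)\le H(\phi_1)+H(\phi_2)<\infty$; I would carry this out routinely.) To prove the reduced assertion, observe that measurability of $\phi'$ means: for every $\delta>0$ there are a finite $F\subseteq G$ (which I may take to contain the identity $e$) and a map $\theta\colon A^F\to A'$ with $\mu(\phi'\ne\theta\circ\phi^F)<\delta$; writing $\beta\colon A^F\to A\times A'$, $\beta(u)=(u_e,\theta(u))$, the observable $\phi\vee\phi'$ then agrees with $\beta\circ\phi^F$ off a set of $\mu$-measure $<\delta$. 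For a finite observable $\psi\colon X\to A_0$ I write $N_i(\psi,H,\epsilon)$ for the number of maps $\{1,\dots,m_i\}\to A_0$ within $d^H_{\sigma_i}$-distance $\epsilon$ of $\psi$, so that $h(\Sigma,T,\psi)=\inf_H\inf_\epsilon\limsup_i\tfrac1{m_i}\log N_i(\psi,H,\epsilon)$.

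For the inequality $h(\Sigma,T,\phi\vee\phi')\ge h(\Sigma,T,\phi)$ I would lift good models of $\phi$ to good models of $\phi\vee\phi'$: given $\psi\colon\{1,\dots,m_i\}\to A$, set $q:=\beta\circ\psi^F=(\psi,\theta\circ\psi^F)$. Since $\ell^1$-distance of measures contracts under push-forward, since $q^H_*\zeta_i$ and $(\beta\circ\phi^F)^H_*\mu$ are the $\beta^H$-images of the corresponding $F$-joined data of $\psi$ and $\phi$, and since $(\phi\vee\phi')^H_*\mu$ lies within $2|H|\delta$ of $(\beta\circ\phi^F)^H_*\mu$, one gets $d^H_{\sigma_i}(\phi\vee\phi',q)\le d^{H'}_{\sigma_i}(\phi,\psi)+2|H|\delta$ for a finite set $H'$ depending only on $H$ and $F$. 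The assignment $\psi\mapsto q$ is injective, so $N_i(\phi\vee\phi',H,\epsilon)\ge N_i(\phi,H',\epsilon-2|H|\delta)$ once $2|H|\delta<\epsilon$; fixing $H$ and $\epsilon$, choosing $\delta$ small accordingly, taking $\limsup_i\tfrac1{m_i}\log$ and then the infimum over $H,\epsilon$ yields the claim.

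For the reverse inequality $h(\Sigma,T,\phi\vee\phi')\le h(\Sigma,T,\phi)$ the point is that a good model of $\phi\vee\phi'$ is almost determined by its first coordinate. Take $H\supseteq F$ and $q=(q_1,q_2)$ with $d^H_{\sigma_i}(\phi\vee\phi',q)\le\epsilon$. Projecting to the first factor gives $d^H_{\sigma_i}(\phi,q_1)\le\epsilon$, so $q_1$ is one of the $N_i(\phi,H,\epsilon)$ models of $\phi$. Marginalizing the near-equality $q^H_*\zeta_i\approx(\phi\vee\phi')^H_*\mu$ onto the coordinates indexed by $F$ in the $A$-factor and by $e$ in the $A'$-factor, the empirical law of $j\mapsto(q_1^F(j),q_2(j))$ on $A^F\times A'$ is within $\epsilon$ of the law of $(\phi^F,\phi')$, which puts mass $\ge1-\delta$ on the graph of $\theta$; hence $q_2(j)=\theta(q_1^F(j))$ for all but at most $(\epsilon+\delta)m_i$ indices $j$. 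Thus for each admissible $q_1$ there are at most $\binom{m_i}{\lceil(\epsilon+\delta)m_i\rceil}|A'|^{\lceil(\epsilon+\delta)m_i\rceil}$ admissible $q_2$, so $N_i(\phi\vee\phi',H,\epsilon)\le N_i(\phi,H,\epsilon)\binom{m_i}{\lceil(\epsilon+\delta)m_i\rceil}|A'|^{\lceil(\epsilon+\delta)m_i\rceil}$. Using $N_i(\phi,H,\epsilon)\le N_i(\phi,H'',\epsilon')$ for $H\supseteq H''$ and $\epsilon\le\epsilon'$, I would pick $H'',\epsilon'$ nearly realizing $h(\Sigma,T,\phi)$, take $H=H''\cup F$, and let $\epsilon,\delta\to0$ (the binomial factor being $e^{o(m_i)}$ in that regime).

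I expect the main obstacle to be the bookkeeping of the three nested operations $\inf_H\inf_\epsilon\limsup_i$ against the approximation parameter $\delta$, together with the attendant enlargements of the test set (from $H$ to $H'$ in the first inequality, and the choice $H=H''\cup F$ in the second): making these quantifier orders cohere, and checking that the combinatorial defect really is negligible in the iterated limit, is where the care lies, whereas the measure-theoretic inputs -- contraction of $\ell^1$ under push-forward, and concentration of an empirical measure near the support of the distribution it approximates -- are elementary. (It is worth noting that asymptotic freeness of $\Sigma$ plays no role in this particular argument; it enters elsewhere in the main result of [Bo08b].) A secondary, purely routine point is the passage from finite-range to finite-entropy observables via Definition~\ref{defn:infinite}, where one checks that truncating $\phi_2$ and forming the join commute well enough in the limit.
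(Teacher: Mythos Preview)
The paper does not contain a proof of Theorem~\ref{thm:Bo08b}; it is quoted as ``(part of) the main result of [Bo08b]'' and only cited here, so there is no in-paper argument to compare your proposal against.

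For what it is worth, your sketch follows the standard route used in [Bo08b]: reduce to the assertion $h(\Sigma,T,\phi\vee\phi')=h(\Sigma,T,\phi)$ when $\phi'$ lies in the $G$-invariant $\sigma$-algebra generated by $\phi$, approximate $\phi'$ by $\theta\circ\phi^F$ for a finite $F\subset G$, then in one direction lift models $\psi\mapsto(\psi,\theta\circ\psi^F)$ injectively, and in the other direction bound the fiber over each first-coordinate model by a subexponential combinatorial factor. The outline is correct, and your own caveat is the right one: the only genuine work is aligning the quantifiers $\inf_H\inf_\epsilon\limsup_i$ with the approximation parameter $\delta$ (which in turn controls $F$, hence the enlarged test sets $H'$ and $H''\cup F$). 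Two small points to watch when you write it out: in the lifting direction the enlarged set must contain the product $HF=\{hf:h\in H,\,f\in F\}$, not just $H\cup F$, so that $(\beta\circ\psi^F)^H$ is a pushforward of $\psi^{HF}$; and in the reverse direction you let $\delta\to0$ \emph{after} fixing $H'',\epsilon'$, so $F$ (hence $H=H''\cup F$) varies along the limit---this is fine because $h(\Sigma,T,\phi\vee\phi')$ is bounded above by the expression at \emph{any} particular $H,\epsilon$. Your remark that asymptotic freeness plays no role in this particular step is also correct.
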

This motivates the following definition.
\begin{defn}
If $\Sigma$ and $T$ are as above then the $\Sigma$-entropy of the action $T$ is defined by $h(\Sigma,T):=h(\Sigma,\phi)$ where $\phi$ is any finite-entropy generating observable (if one exists). 
\end{defn}

Next let us discuss a slight variation on $\Sigma$-entropy. Let $\{m_i\}_{i=1}^\infty$ be a sequence of natural numbers. For each $i\in \N$, let $\mu_i$ be a probability measure on the set of homomorphisms from $G$ to $\sym(m_i)$. Let $\sigma_i:G\to \sym(m_i)$ be chosen at random according to $\mu_i$. The sequence $\Sigma=\{\mu_i\}_{i=1}^\infty$ is said to be {\em asymptotically free} if for every pair $g_1,g_2\in G$ with $g_1\ne g_2$,
$$\lim_{i\to\infty} \frac{\bE\Big[\big| \{ 1\le j \le m_i ~:~ \sigma_i(g_1)j=\sigma_i(g_2)j\} \big| \Big]}{m_i} = 0$$
where $\bE[\cdot]$ denotes expected value. The $\Sigma$-entropy rate of an observable $\phi:X\to A$ with $A$ finite  is defined by
$$h(\Sigma,T,\phi):=\inf_{H \subset G} \inf_{\epsilon>0} \limsup_{i\to\infty}\frac{1}{m_i}\log\Big( \bE\Big[\big| \{\psi:\{1,\ldots,m_i\}\to A~:~d^H_{\sigma_i}(\phi,\psi)\le \epsilon\}\big|\Big]\Big).$$
With these definitions in mind, theorem \ref{thm:Bo08b} is still true if ``homomorphisms'' is replaced with ``probability measures on the set of homorphisms''.

Let us note one more generalization. If $G$ is a semigroup with identity then the above definitions still make sense. Using results from [Bo08c] it can be shown that theorem \ref{thm:Bo08b} remains true.

Now let us recall the $f$-invariant from [Bo08a]. Let $G=\langle s_1,\ldots, s_r \rangle$ be either a free group or free semigroup of rank $r$. Let $G \curvearrowright^T (X,\mu)$. Let $\alpha$ be a partition of $X$ into at most countably many measurable sets. The {\em entropy} of $\alpha$ is defined by 
$$H(\alpha):=-\sum_{A\in\alpha} \mu(A)\log(\mu(A))$$
where, by convention, $0\log(0)=0$. If $\alpha$ and $\beta$ are partitions of $X$ then the {\em join} is the partition $\alpha \vee \beta:=\{A \cap B~|~A\in \alpha, B \in\beta\}$. Let $B(e,n)$ denote the ball of radius $n$ in $G$ with respect to the word metric induced by its generating set (which is either $\{s_1,\ldots, s_r\}$ if $G$ is a semigroup or $\{s_1^{\pm 1}, \ldots, s_r^{\pm 1}\}$ is $G$ is a group). Define
\begin{eqnarray*}
F(T,\alpha)&:=&(1-2r)H(\alpha) + \sum_{i=1}^r H(\alpha \vee T_{s_i}^{-1}\alpha)\\
\alpha^n&:=&\bigvee_{g\in B(e,n)} T_g^{-1}\alpha\\
f(T,\alpha)&:=&\inf_n F(T,\alpha^n).
\end{eqnarray*}
The partition $\alpha$ is {\em generating} if the smallest $G$-invariant $\sigma$-algebra containing $\alpha$ equals the $\sigma$-algebra of all measurable sets up to sets of measure zero.

\begin{thm} 
Let $G=\langle s_1,\ldots,s_r\rangle$ be a free group or free semigroup.  Let $G \curvearrowright^T (X,\mu)$. If $\alpha_1$ and $\alpha_2$ are two generating partitions with $H(\alpha_1)+H(\alpha_2)<\infty$ then $f(T,\alpha_1)=f(T,\alpha_2)$.
\end{thm}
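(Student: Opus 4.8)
The plan is to deduce this theorem from Theorem \ref{thm:Bo08b} by exhibiting a specific asymptotically free sequence $\Sigma$ of random homomorphisms for which $h(\Sigma,T,\alpha)$ coincides with $f(T,\alpha)$ for every finite-entropy generating partition $\alpha$. Since Theorem \ref{thm:Bo08b} already asserts that $h(\Sigma,T,\cdot)$ is independent of the choice of finite-entropy generating observable, the equality $f(T,\alpha_1)=f(T,\alpha_2)$ follows immediately once the identification $h(\Sigma,T,\alpha)=f(T,\alpha)$ is established. (One must first reduce to the case of finite partitions via Definition \ref{defn:infinite} and the corresponding truncation procedure for the $f$-invariant, which is routine since $F(T,\alpha)$ and $f(T,\alpha)$ depend continuously on $\alpha$ in the appropriate sense.)

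The natural candidate for $\Sigma$ is the sequence of \emph{uniformly random} homomorphisms: for a free group $G=\langle s_1,\ldots,s_r\rangle$, let $\mu_i$ be the law of $\sigma_i:G\to\sym(m_i)$ obtained by choosing $\sigma_i(s_1),\ldots,\sigma_i(s_r)$ independently and uniformly at random from $\sym(m_i)$ (for a free semigroup, choose random maps $\{1,\ldots,m_i\}\to\{1,\ldots,m_i\}$ instead). A standard first-moment computation shows this sequence is asymptotically free. The content of the theorem then reduces to the claim:
\begin{equation*}
h(\Sigma,T,\alpha)=f(T,\alpha)=\inf_n F(T,\alpha^n)=\inf_n\Big[(1-2r)H(\alpha^n)+\sum_{i=1}^r H(\alpha^n\vee T_{s_i}^{-1}\alpha^n)\Big].
\end{equation*}

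To prove this, fix a finite partition (observable) $\phi$ with values in $A$, fix a finite symmetric set $H=B(e,n)\subset G$, and fix $\epsilon>0$; I would estimate the expected number $N_i(\phi,H,\epsilon)$ of observables $\psi:\{1,\ldots,m_i\}\to A$ with $d^H_{\sigma_i}(\phi,\psi)\le\epsilon$. The key point is that requiring $\psi^H_*\zeta_i$ to be within $l^1$-distance $\epsilon$ of $\phi^H_*\mu$ is essentially requiring that, for each pattern $a\in A^H$, the fraction of indices $j$ at which the $H$-pattern of $\psi$ around $j$ equals $a$ is approximately $\phi^H_*\mu(a)$. Because $\sigma_i$ is a random homomorphism of the free group, the "Schreier graph" on $\{1,\ldots,m_i\}$ with edges labeled by the generators is, with high probability, locally tree-like (few short cycles), so the relevant combinatorial count is governed by the entropy of a shift-invariant measure on $A$-labelings of the $r$-regular tree whose restriction to balls $B(e,n)$ matches $\phi^H_*\mu$. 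The maximal exponential growth rate of the number of such labelings of an $m_i$-vertex random graph is exactly $\frac{1}{m_i}\log N_i \to (1-2r)H(\phi^H)+\sum_{i=1}^r H(\phi^H\vee T_{s_i}^{-1}\phi^H)$ when $n$ is large enough that $H$ "sees" the partition $\phi$ through one more generator step; this is the edge/vertex entropy formula $F$, and taking the infimum over $H$ (equivalently over $n$) yields $f(T,\phi)$. I would carry this out in two inequalities: an upper bound via a union bound / counting argument using linearity of expectation over the at most $|A|^{|H|}$ constraints, and a lower bound via a second-moment or explicit construction argument showing that a positive (or at least subexponentially small) fraction of labelings meeting the constraints actually exists.

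The main obstacle I anticipate is the lower bound — showing that the expected number of good $\psi$ is actually as large as $e^{m_i F(T,\alpha^n)(1+o(1))}$, rather than merely bounded above by it. This requires controlling the random Schreier graph well enough to count extensions of partial labelings, which is where the precise tree-like structure of random regular graphs (and the Bernoulli-type independence of labels on a tree) must be invoked; concentration of $N_i$ around its expectation (so that the expected count is not dominated by rare atypical $\sigma_i$) may also need attention, though the ``probability measures on homomorphisms'' version of Theorem \ref{thm:Bo08b} is designed precisely to absorb this. A secondary technical point is the passage between the $\inf_H$ in the definition of $h(\Sigma,T,\phi)$ and the $\inf_n$ in the definition of $f(T,\alpha)$: one must check that restricting to balls $H=B(e,n)$ is cofinal and that the $F$-formula emerges cleanly only once $H$ is large relative to the "memory" of $\alpha$, so that the local count around a vertex decomposes as one vertex term plus $r$ edge terms with no boundary correction surviving in the limit.
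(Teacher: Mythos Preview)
The paper does not prove this theorem; it simply cites [Bo08c] (and [Bo08a] for the finite-partition case). That said, your overall strategy---deduce the result from Theorem~\ref{thm:Bo08b} together with the identification $h(\Sigma,T,\phi)=f(T,\phi)$ for the uniform random-homomorphism model---is exactly the route the paper itself makes available via Theorem~\ref{thm:main}. So at the level of architecture your proposal is correct and aligned with the paper.

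Where your sketch diverges substantially is in the proof of the key identity $h(\Sigma,T,\phi)=f(T,\phi)$. You anticipate needing the locally tree-like structure of random Schreier graphs, a second-moment argument for the lower bound, and possibly concentration of the count $N_i$ around its mean. The paper needs none of this. Its approach is purely combinatorial: for a weight $W$ with $nW(a,b;i)\in\Z$, one writes down an \emph{exact} closed-form expression for
\[
\bE\big[\#\{\psi:d_*(W,W_{\sigma,\psi})=0\}\big]
=\frac{n!^{\,1-r}\prod_a (nW(a))!^{2r-1}}{\prod_{i}\prod_{a,b}(nW(a,b;i))!}
\]
by directly counting (homomorphism, labeling) pairs via linearity of expectation and independence of the generators $\sigma(s_1),\dots,\sigma(s_r)$. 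Stirling's formula then gives $F(W)$ on the nose, and continuity of $F$ plus a rational-approximation lemma finishes Theorem~\ref{thm:second}. The passage from $F$ to $f$ (your ``secondary technical point'') is handled in Section~\ref{sec3} by a map $R:(\sigma,\psi)\mapsto(\sigma,\pi_e\circ\psi)$ between the $d^*$-model for $\phi^{B(e,m)}$ and the $d^{B(e,m)}$-model for $\phi$, together with a fiber-size bound. No graph-structural input and no variance control are used anywhere; since only the expectation enters the definition of $h(\Sigma,T,\phi)$, concentration is irrelevant.

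Two small corrections: in the semigroup case the paper still uses homomorphisms into $\sym(n)$ (i.e.\ random permutations), not random maps as you suggest; and your anticipated ``main obstacle'' (the lower bound) is in fact the easier half here, precisely because the exact formula above already gives both bounds simultaneously.
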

This theorem was proven in [Bo08c]. The special case in which $G$ is a group and $\alpha_1,\alpha_2$ are finite is the main result of [Bo08a]. Because of this theorem, we define the $f$-invariant of the action by $f(T):=f(T,\alpha)$ where $\alpha$ is any finite-entropy generating partition of $X$ (if one exists). 

In order to relate this result with $\Sigma$-entropy, let us make the following definitions. If $\phi:X \to A$ is an observable, then let $\bphi = \{\phi^{-1}(a)\}_{a\in A}$ be the corresponding partition of $X$. Define $F(T,\phi):=F(T,\bphi)$ and $f(T,\phi):=f(T,\bphi)$. The main result of this paper is:
\begin{thm}\label{thm:main}
Let $G=\langle s_1,\ldots, s_r\rangle$ be a free group or free semigroup of rank $r\ge 1$.  Let $G \curvearrowright^T (X,\mu)$. Let $\phi$ be a finite observable. For $i \ge 1$, let $\mu_i$ be the uniform probability measure on the set of all homomorphisms from $G$ to $\sym(i)$. Let $\Sigma=\{\mu_i\}_{i=1}^\infty$. Then $h(\Sigma,T,\phi) = f(T,\phi).$
\end{thm}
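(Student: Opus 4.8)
The plan is to unwind $h(\Sigma,T,\phi)$ for the uniform model, recognize it as a ``count of good labelings'' of random homomorphisms of the kind that computes the $f$-invariant, and match the two; I describe the free-group case, the free-semigroup case being formally the same with the $2r$-regular Cayley tree of $G$ replaced by the rooted $r$-ary tree. A uniformly random homomorphism $\sigma\colon G\to\sym(m)$ is just an $r$-tuple of independent uniform permutations (resp.\ functions), so the Schreier graph $\Gamma_\sigma$ on $\{1,\dots,m\}$, with one edge-type per generator, is with probability $1-e^{-\Omega(m)}$ locally tree-like: for each fixed $R$ and $\epsilon$, all but $\epsilon m$ vertices have $R$-neighbourhood isomorphic to the $R$-ball in the Cayley graph of $G$ (this is the same fact that makes $\Sigma$ asymptotically free). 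I would then replace $\inf_{H\subset G}$ by the limit over balls $B(e,n)$: marginalisation contracts the $\ell^1$-norm, so $H\subseteq K$ forces $d^H_\sigma(\phi,\psi)\le d^K_\sigma(\phi,\psi)$, hence the counting function in the definition of $h$ is monotone in the window and $\inf_H$ equals $\lim_n$ along balls. So it suffices to analyse, for each $n$,
$$c_n:=\inf_{\epsilon>0}\ \limsup_{m\to\infty}\ \frac1m\log\bE\Big[\big|\{\psi\colon\{1,\dots,m\}\to A\ :\ d^{B(e,n)}_\sigma(\phi,\psi)\le\epsilon\}\big|\Big].$$

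Because the expectation sits outside the logarithm, $c_n$ is the exponential growth rate of the first moment $\bE_\sigma[\#\{\text{good }\psi\}]=\sum_\psi\bP_\sigma[\psi\text{ good}]$, which I would evaluate by the method of types adapted to the locally tree-like geometry of $\Gamma_\sigma$. The outcome should be that $c_n$ equals the per-vertex entropy rate of the shift-invariant Markov random field on the Cayley tree of $G$ matching the radius-$n$ ball-marginals of $g\mapsto\phi(T_gx)$; expanding that rate by the chain rule along the tree — each vertex carries $r$ edges but is over-counted $2r-1$ times by its incident half-edges — produces a sum of joint entropies of exactly the shape $\sum_{j=1}^r H(\bphi^{k}\vee T_{s_j}^{-1}\bphi^{k})-(2r-1)H(\bphi^{k})=F(T,\bphi^{k})$. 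Tracking the boundary of the ball, $c_n$ is squeezed between $F(T,\bphi^{n})$ and $F(T,\bphi^{n-1})$ for $n\ge1$, while the degenerate window $\{e\}$ gives $c_0=H(\bphi)\ge F(T,\bphi)$; since every $c_H\ge F(T,\bphi^{k})\ge f(T,\phi)$ for a suitable $k$ and $\inf_k F(T,\bphi^k)=f(T,\phi)$, taking the infimum over windows yields $h(\Sigma,T,\phi)=f(T,\phi)$ regardless of the precise index shift.

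The two inequalities for $c_n$ are proved as first-moment bounds. For $c_n\le F(T,\bphi^{n-1})$: on the overwhelmingly likely event that $\Gamma_\sigma$ is $\epsilon$-close to tree-like, let $\psi$ be uniform on the good set, so $\log\#\{\text{good}\}=H\big((\psi^{B(e,n-1)}(v))_v\big)$, and reveal the radius-$(n-1)$ patterns $\psi^{B(e,n-1)}(v)\in A^{B(e,n-1)}$ one vertex at a time along an exploration of $\Gamma_\sigma$; each conditional-entropy increment across an $s_j$-edge is within $\delta(\epsilon)$ of the matched empirical value $H(\bphi^{n-1}\vee T_{s_j}^{-1}\bphi^{n-1})-H(\bphi^{n-1})$, the $\epsilon m$ exceptional vertices contribute at most $\epsilon m\log|A^{B(e,n-1)}|$, and the complementary $e^{-\Omega(m)}$ event is absorbed by the trivial bound $|A|^m$. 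For $c_n\ge F(T,\bphi^{n})$: for each tree-like $\sigma$ I would exhibit $\exp(m(F(T,\bphi^{n})-\delta))$ good labelings by sampling $\psi$ from the order-$n$ tree Markov random field carried on the tree-like part of $\Gamma_\sigma$ — whose Shannon entropy is $\approx mF(T,\bphi^{n})$ — and invoking a law of large numbers for that field on a locally tree-like graph to see that almost all of its mass lies on labelings with $B(e,n)$-statistics within $\epsilon$ of $\phi^{B(e,n)}_*\mu$.

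The real difficulty, and where I expect the main obstacle, is the core of the upper bound: local tree-likeness must cap the count at the tree value \emph{with the correct, symmetric dependence on all $r$ generators}. A breadth-first reveal bounds the log-count by a sum over the edges of a spanning forest and so overweights whichever generators carry those edges — even a perfectly balanced forest overshoots $mF(T,\bphi^{n-1})$ by $\tfrac{r-1}{r}\big(2rH(\bphi^{n-1})-\sum_j H(\bphi^{n-1}\vee T_{s_j}^{-1}\bphi^{n-1})\big)m$, which is strictly positive once $r\ge2$ (it vanishes for $r=1$, i.e.\ for $\Z$, which is why that case is classical). One must instead organise the reveal to extract the missing entropy from the surplus, cycle-closing edges; this is in essence the assertion that the annealed free energy of a locally tree-like $2r$-regular ensemble equals its tree value — a genuine entropy inequality rather than a formal manipulation — together with the companion concentration statement needed for the lower bound. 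Both ingredients are of the type furnished by the combinatorial estimates of [Bo08a] and [Bo08c], from which they should be imported; the bookkeeping relating the window $B(e,n)$ to the shift $\bphi^{n-1}$, and the passage to countably infinite alphabets (not needed here), are routine by comparison.
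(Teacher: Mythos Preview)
Your high-level plan --- reduce to the ball windows, squeeze $c_n$ between consecutive $F$-values, take the infimum --- is reasonable, but your execution diverges sharply from the paper's, and the step you yourself flag as the obstacle is left unresolved.

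The paper does not argue via local tree-likeness, explorations, or entropy reveals at all. Its engine is an \emph{exact} annealed first-moment formula (Theorem~\ref{thm:second}): for the edge-marginal pseudometric $d^*_\sigma(\phi,\psi)=\sum_{i=1}^r d^{\{e,s_i\}}_\sigma(\phi,\psi)$, the expected number of $\psi:\{1,\dots,n\}\to A$ with prescribed edge-type $W$ is computed in closed form as $n!^{1-r}\prod_a(nW(a))!^{2r-1}\big/\prod_{i,a,b}(nW(a,b;i))!$. This follows by fixing one $\psi_0$ with the right vertex-type, using conjugation-invariance of the uniform law on $\sym(n)^r$, and then the independence of $\sigma(s_1),\dots,\sigma(s_r)$ to factor the probability over generators; Stirling then reads off $F(W)$ directly, with the coefficient $2r-1$ coming from the factorials, not from any tree geometry. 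This is exactly the ``annealed free energy equals the tree value'' assertion you isolate as the hard part --- but in the annealed model it is an identity, not an inequality, and no spanning-forest accounting is needed. Given Theorem~\ref{thm:second}, both directions are short: $d^*\le r\,d^{\{s_1,\dots,s_r\}}$ gives $h(\Sigma,T,\phi)\le F(T,\phi)$, and replacing $\phi$ by $\phi^{B(e,n)}$ together with the generator-independence of $h$ from [Bo08b] yields $h\le\inf_n F(T,\bphi^n)=f$; for $h\ge f$ the paper maps $(\sigma,\psi)\mapsto(\sigma,\pi_e\circ\psi)$ from pairs good for $d^*$ on $\phi^{B(e,m)}$ to pairs good for $d^{B(e,m)}$ on $\phi$ and bounds the fibers combinatorially, giving $c_m\ge F(T,\bphi^m)$.

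So the genuine gap in your proposal sits precisely where you put it: the per-realization entropy reveal along a spanning forest of a typical $\Gamma_\sigma$ overshoots $F(T,\bphi^{n-1})$ by the positive quantity you wrote down, and you do not recover the deficit --- you defer it to [Bo08a], [Bo08c]. The paper shows this detour is unnecessary: because the expectation sits outside the logarithm and the uniform law on $\sym(n)^r$ is a product of Haar measures, the annealed count factors exactly, and that exact count, rather than any quenched entropy inequality or concentration statement, is what drives the proof.
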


We will prove a refined version of this theorem as follows. Recall the definition of $d^H_{\sigma_i}(\phi,\psi)$ given above. Define
$$d^*_{\sigma_i}(\phi,\psi) := \sum_{i=1}^r d^{ \{e,s_i\} }_{\sigma_i}(\phi,\psi).$$
\begin{thm}\label{thm:second}
Let $G$ and $T$ be as in the previous theorem. Let $\phi:X \to A$ be a finite observable. Let $\sigma_i:G\to \sym(i)$ be a homomorphism chosen uniformly at random. Then
$$F(T,\phi) = \inf_{\epsilon>0} \lim_{i\to\infty}\frac{1}{i}\log\Big( \bE\Big[\big| \{\psi:\{1,\ldots,i\}\to A~:~d^*_{\sigma_i}(\phi,\psi)\le \epsilon\}\big|\Big]\Big). $$
\end{thm}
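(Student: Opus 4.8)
The plan is to evaluate the right‑hand side by stratifying the count of ``good'' $\psi$'s according to the \emph{type} of $\psi$ and its induced transition data, and then to pass to classical Stirling asymptotics. Put $p_a:=\mu(\phi^{-1}(a))$ and, for $1\le k\le r$, $p^{(k)}_{a,b}:=\mu\big(\phi^{-1}(a)\cap T_{s_k}^{-1}\phi^{-1}(b)\big)$. Since every $T_{s_k}$ is measure preserving, both marginals of the matrix $(p^{(k)}_{a,b})$ equal $(p_a)_a$, and one has $F(T,\phi)=(1-2r)H(\phi)+\sum_{k=1}^r H(\phi\vee T_{s_k}\phi)$ with $H(\phi)=-\sum_a p_a\log p_a$ and $H(\phi\vee T_{s_k}\phi)=-\sum_{a,b}p^{(k)}_{a,b}\log p^{(k)}_{a,b}$. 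Given $\psi:\{1,\dots,i\}\to A$ write its \emph{type} $\vec n(\psi)=(n_a)_{a\in A}$, $n_a=|\psi^{-1}(a)|$, and given $\sigma_i$ write $e^{(k)}_{a,b}=\big|\{\,j:\psi(j)=a,\ \psi(\sigma_i(s_k)j)=b\,\}\big|$, so that $d^*_{\sigma_i}(\phi,\psi)=\sum_{k=1}^r\sum_{a,b}\big|p^{(k)}_{a,b}-e^{(k)}_{a,b}/i\big|$.

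First I would write, by linearity of expectation, $\bE\big[\#\{\psi:d^*_{\sigma_i}(\phi,\psi)\le\epsilon\}\big]=\sum_\psi \mathbf{P}_{\sigma_i}\big[d^*_{\sigma_i}(\phi,\psi)\le\epsilon\big]$. The uniform measure on $\hom(G,\sym(i))$ is invariant under $\sigma_i\mapsto\pi\sigma_i\pi^{-1}$ for $\pi\in\sym(i)$, and a short change of variables gives $d^*_{\pi\sigma_i\pi^{-1}}(\phi,\psi)=d^*_{\sigma_i}(\phi,\psi\circ\pi)$; hence the probability above depends on $\psi$ only through $\vec n(\psi)$, say it equals $P(\vec n;\epsilon)$. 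Since there are $(i+1)^{|A|}$ types and $i!/\prod_a n_a!$ observables of each type, $\bE\big[\#\{\psi:d^*_{\sigma_i}(\phi,\psi)\le\epsilon\}\big]=\sum_{\vec n}\big(i!/\prod_a n_a!\big)\,P(\vec n;\epsilon)$. Because $\sigma_i(s_1),\dots,\sigma_i(s_r)$ are independent uniform permutations, $P(\vec n;\epsilon)$ is a sum, over $r$‑tuples of transition matrices $(e^{(k)})_{k=1}^r$ with $\sum_k\|e^{(k)}/i-p^{(k)}\|_{\ell^1}\le\epsilon$, of $\prod_{k=1}^r w_{\vec n}(e^{(k)})$, where $w_{\vec n}(e)$ is the probability that a uniform random permutation of $\{1,\dots,i\}$ carries a fixed ordered partition of type $\vec n$ to one with transition matrix exactly $e$; an elementary count gives $w_{\vec n}(e)=\big(i!\big)^{-1}\big(\prod_b n_b!\big)\prod_a\big(n_a!/\prod_b e_{a,b}!\big)$ when $e$ has row sums $(n_a)$ and column sums $(n_b)$, and $w_{\vec n}(e)=0$ otherwise. (For the free semigroup the $\sigma_i(s_k)$ are uniform random self‑maps, the rows $(e^{(k)}_{a,\cdot})$ are then independent multinomials, and $w_{\vec n}(e)=\prod_a\big(n_a!/\prod_b e_{a,b}!\big)\prod_b(n_b/i)^{e_{a,b}}$; the rest of the argument is identical.)

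Next I would feed everything into Stirling. The number of feasible tuples $(\vec n,(e^{(k)}))$ is polynomial in $i$, so $\tfrac1i\log$ of the above sum equals, up to $o(1)$, the maximum over such tuples of $\tfrac1i\log\big[(i!/\prod_a n_a!)\prod_k w_{\vec n}(e^{(k)})\big]$; and Stirling (with error $O(i^{-1}\log i)$ uniform over arguments in $\{0,\dots,i\}$, entries that stay bounded contributing $o(i)$ to the logarithm) shows this quantity tends to $(1-2r)H(\rho)+\sum_{k=1}^r H(q^{(k)})$ whenever $\vec n/i\to\rho$ and $e^{(k)}/i\to q^{(k)}$, where $H(\cdot)$ is Shannon entropy, $\rho$ is a probability vector on $A$, and each $q^{(k)}$ is a probability matrix on $A\times A$ with both marginals $\rho$. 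A compactness argument for the $\limsup$, together with an explicit approximation of limiting configurations by feasible integer configurations for the $\liminf$ (here using that a transportation polytope with integral marginals of equal total is nonempty and has integral vertices), then shows that for each fixed $\epsilon>0$ the limit over $i$ exists and equals
$$\sup\Big\{(1-2r)H(\rho)+\textstyle\sum_{k=1}^r H(q^{(k)})\ :\ \sum_k\|q^{(k)}-p^{(k)}\|_{\ell^1}\le\epsilon,\ \text{each }q^{(k)}\text{ has both marginals }\rho\Big\}.$$

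To finish, observe that this feasible set is nonempty (it contains $\rho=(p_a)$ and $q^{(k)}=p^{(k)}$) and shrinks, in the Hausdorff metric, to the single point $\{((p_a),p^{(1)},\dots,p^{(r)})\}$ as $\epsilon\downarrow0$, while the supremum is nondecreasing in $\epsilon$ and the objective is continuous; hence $\inf_{\epsilon>0}$ of the displayed quantity equals $(1-2r)H((p_a))+\sum_k H(p^{(k)})=(1-2r)H(\phi)+\sum_k H(\phi\vee T_{s_k}\phi)=F(T,\phi)$, which is the theorem. I expect the combinatorial identities and Stirling estimates to be routine; the one place needing real care is precisely the passage from $\limsup$ to $\lim$ for fixed $\epsilon$ — showing that no family of suboptimal types or transition matrices beats the leading exponential, and that every point of the limiting feasible polytope is realized (to within $o(i)$ in the exponent) by genuine integer configurations for all large $i$ — which is a standard but slightly fussy large‑deviations bookkeeping resting on the non‑emptiness and integrality of transportation polytopes with matching marginals.
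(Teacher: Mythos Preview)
Your proposal is correct and follows essentially the same route as the paper: stratify the expected count by the empirical weight (your type $\vec n$ together with the transition matrices $e^{(k)}$), compute the contribution of each stratum exactly, apply Stirling to see that the exponent is $(1-2r)H(\rho)+\sum_k H(q^{(k)})=F(W)$, and conclude by noting that the number of strata is polynomial in $i$ while $F$ is continuous. The paper packages $(\vec n,(e^{(k)}))$ into a single ``weight'' $W$ and proves your formula for $w_{\vec n}(e)$ as a separate proposition, but the arithmetic is identical --- your $(i!)^{-1}(\prod_b n_b!)\prod_a(n_a!/\prod_b e_{a,b}!)$ is exactly the paper's $\prod_a(nW(a))!^2\big/\big(n!\prod_{a,b}(nW(a,b;i))!\big)$.

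One small correction to your parenthetical: in the paper's setup $\sigma_i:G\to\sym(i)$ is a homomorphism into the symmetric \emph{group} even when $G$ is the free semigroup, so each $\sigma_i(s_k)$ is still a uniform random permutation, not a uniform random self-map; the multinomial formula you wrote for that case is therefore not the relevant one, and in fact no separate argument is needed --- the permutation count already covers both cases.
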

This theorem is proven in section \ref{sec2}. In section \ref{sec3} we prove theorem \ref{thm:main} from it.

\subsection{Application I: automorphism invariance}

Let $G$ be a countable group or semigroup. Let $G \curvearrowright^T (X,\mu)$. Let $\omega:G \to G$ be an automorphism. Let $T^\omega=(T^\omega_g)_{g\in G}$ where $T^\omega_gx:=T_{\omega(g)}x$ for all $x\in X$. This new action of $G$ is not necessarily isomorphic to the original action. That is, there might not exist a map $\phi:X \to X$ such that $\phi(T_gx)=T^\omega_g\phi(x)$ for a.e. $x\in X$ and all $g\in G$.

Let $\Sigma=\{\sigma_i\}$ be an asymptotically free sequence of homomorphisms $\sigma_i:G \to \sym(m_i)$. Let $\Sigma^\omega=\{\sigma_i \circ \omega\}$. A short exercise reveals that $h(\Sigma,T,\phi)=h(\Sigma^\omega,T^\omega,\phi)$ for any  $\phi$. 

If $\sigma_i:G \to \sym(i)$ is chosen uniformly at random, it follows that the law of $\sigma_i \circ \omega$ is the same as the law of $\sigma_i$. Therefore, if $\mu_i$ is the uniform probability measure on the set of homomorphisms from $G$ to $\sym(i)$ and $\Sigma=\{\mu_i\}$, then $h(\Sigma,T,\phi) = h(\Sigma,T^\omega,\phi)$. Theorem \ref{thm:main} now implies:
\begin{thm}
Let $G$ and $T$ be as in theorem \ref{thm:main}. Let $\omega:G \to G$ be an automorphism. Then for any finite observable $\phi$, $f(T,\phi)=f(T^\omega,\phi)$.
\end{thm}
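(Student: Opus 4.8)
The plan is to deduce the statement by chaining Theorem~\ref{thm:main}, applied to both $T$ and $T^\omega$, with the two observations recorded in the text just before the theorem: that $\Sigma$-entropy transforms covariantly under precomposition by $\omega$, and that the uniform measure on homomorphisms $G\to\sym(i)$ is invariant under precomposition by an automorphism. So the proof will be essentially a bookkeeping exercise with no estimates.

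First I would fix $\Sigma=\{\mu_i\}$ with $\mu_i$ the uniform probability measure on the \emph{finite} set $\hom(G,\sym(i))$ (finite because $G$ is free of rank $r$, so a homomorphism is determined by $r$ elements of $\sym(i)$). Since $\omega$ is a homomorphism, $T^\omega$ is again a measure-preserving action of $G$, so Theorem~\ref{thm:main} applies to it as well and yields $f(T,\phi)=h(\Sigma,T,\phi)$ and $f(T^\omega,\phi)=h(\Sigma,T^\omega,\phi)$. It therefore suffices to prove $h(\Sigma,T,\phi)=h(\Sigma,T^\omega,\phi)$.

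Next I would carry out the ``short exercise.'' For a homomorphism $\sigma:G\to\sym(i)$ and a finite $H\subset G$, one checks straight from the definitions that the observable $\phi^H$ formed using the action $T^\omega$ coincides, after relabeling the index set $H$ by the bijection $h\mapsto\omega(h)$, with $\phi^{\omega(H)}$ formed using $T$ (because $T^\omega_h\phi=T_{\omega(h)}\phi$); the same relabeling identifies $\psi^H$ with respect to $\sigma\circ\omega$ with $\psi^{\omega(H)}$ with respect to $\sigma$ (because $(\sigma\circ\omega)(h)\psi=\sigma(\omega(h))\psi$). An $l^1$-distance is unchanged by relabeling coordinates, so $d^H_{\sigma\circ\omega}(\phi,\psi)$ computed with respect to $T^\omega$ equals $d^{\omega(H)}_\sigma(\phi,\psi)$ computed with respect to $T$; hence the counting sets $\{\psi:d^H_{\sigma\circ\omega}(\phi,\psi)\le\epsilon\}$ and $\{\psi:d^{\omega(H)}_\sigma(\phi,\psi)\le\epsilon\}$ coincide. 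As $H$ ranges over all finite subsets of $G$ so does $\omega(H)$, since $\omega$ is a bijection; taking expectations over $\sigma_i\sim\mu_i$ and then $\inf_H$, $\inf_\epsilon$, $\limsup_i$ gives $h(\Sigma^\omega,T^\omega,\phi)=h(\Sigma,T,\phi)$, where $\Sigma^\omega$ is the law-sequence of $\{\sigma_i\circ\omega\}$.

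Finally I would observe that $\sigma\mapsto\sigma\circ\omega$ is a bijection of the finite set $\hom(G,\sym(i))$ onto itself, with inverse $\sigma\mapsto\sigma\circ\omega^{-1}$, so it pushes $\mu_i$ forward to $\mu_i$; thus $\Sigma^\omega$ is the same law-sequence as $\Sigma$, and since the random-homomorphism variant of $\Sigma$-entropy depends only on the laws $\mu_i$, we get $h(\Sigma^\omega,T^\omega,\phi)=h(\Sigma,T^\omega,\phi)$. Combining, $f(T,\phi)=h(\Sigma,T,\phi)=h(\Sigma^\omega,T^\omega,\phi)=h(\Sigma,T^\omega,\phi)=f(T^\omega,\phi)$. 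The only step with any content is the relabeling in the third paragraph; the one thing to be careful about is keeping straight that $\omega$ enters on the $X$-side through $T_{\omega(h)}$ and on the $\{1,\dots,i\}$-side through $\sigma(\omega(h))$, so that both sides of $d^H$ get relabeled by the \emph{same} bijection $h\mapsto\omega(h)$ — but no inequality or limit estimate is needed, and the hypotheses $r\ge1$ and finiteness of $\phi$ are used only through Theorem~\ref{thm:main}.
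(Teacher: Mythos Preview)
Your proposal is correct and follows exactly the approach the paper takes: the paper reduces the claim to $h(\Sigma,T,\phi)=h(\Sigma,T^\omega,\phi)$ via Theorem~\ref{thm:main}, then observes (as a ``short exercise'') that $h(\Sigma,T,\phi)=h(\Sigma^\omega,T^\omega,\phi)$ and that the uniform law on $\hom(G,\sym(i))$ is invariant under $\sigma\mapsto\sigma\circ\omega$. You have simply spelled out the relabeling details of that exercise more carefully than the paper does.
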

This implies that $f(T,\phi)$ does not depend on the choice of free generator set $\{s_1,\ldots, s_r\}$ for $G$ since any two free generating sets are related by an automorphism.

\subsection{Application II: lower bounds on the $f$-invariant of a factor}

\begin{defn}
 Let $G \curvearrowright^T (X,\mu)$ and $G \curvearrowright^S (Y,\nu)$. Then $S$ is a {\em factor} of $T$ if there exists a measurable map $\phi:X \to Y$ such that $\phi_*\mu=\nu$ and $\phi(T_gx)=S_g\phi(x)$ for all $g\in G$ and a.e. $x\in X$.
\end{defn}

To motivate this section, let us point out two curious facts. 

First, Ornstein proved in [Or70] that every factor of a Bernoulli shift over $\Z$ is measurably conjugate to a Bernoulli shift. It is not known whether this holds when $\Z$ is replaced with a nonabelian free group. A counterexample due to Sorin Popa [Po08] (based on [PS07]) shows that if $G$ is an infinite property $T$ group then there exists a factor of a Bernoulli shift over $G$ that is not measurably conjugate to a Bernoulli shift.

Second, the $f$-invariant of an action can be negative. For example, if $X$ is a set with $n$ elements, $\mu$ is the uniform measure on $X$ and $T=(T_g)_{g\in G}$ is a m.p. action of $G=\langle s_1,\ldots, s_r \rangle$ on $X$ then $f(T) = -(r-1)\log(n)$.

From these two facts a natural question arises: can the $f$-invariant of a factor of a Bernoulli shift over $G$ be negative? To answer this, let us recall the following result from [Bo08b, corollary 8.3].


\begin{lem}
Let $G$ be a countable group. Let $\Sigma=\{\sigma_i\}_{i=1}^\infty$ be an asymptotically free sequence of homomorphisms $\sigma_i:G \to \sym(m_i)$. Let $T$ be a m.p. action of $G$ and let $S$ be a factor of $T$. Assume that there exist finite-entropy generating partitions for $T$ and $S$. Also let $\phi$ be a generating observable for $T$ with $H(\phi)<\infty$. Then
$$h(\Sigma,S) \ge h(\Sigma, T) - H(\phi).$$
\end{lem}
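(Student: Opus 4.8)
The plan is to use the factor map to pull back a finite-entropy generating observable of $S$ to an observable on $X$, compare $\Sigma$-entropies on $X$, and absorb the defect using $\phi$. Let $\pi:X\to Y$ be the factor map and let $\psi:Y\to B$ be a finite-entropy generating observable for $S$, which exists by hypothesis. I would work with the pulled-back observable $\psi\circ\pi:X\to B$ and the join $\phi\vee(\psi\circ\pi):X\to A\times B$. Since $\phi$ is generating for $T$, so is $\phi\vee(\psi\circ\pi)$, and its entropy is at most $H(\phi)+H(\psi)<\infty$; hence Theorem~\ref{thm:Bo08b} gives $h(\Sigma,T)=h(\Sigma,T,\phi\vee(\psi\circ\pi))$. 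It therefore suffices to prove $h(\Sigma,T,\phi\vee(\psi\circ\pi))\le h(\Sigma,S)+H(\phi)$. I would first treat the case where $\phi$ and $\psi$ are finite, reducing the general case to it at the end.

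The first ingredient is the identity $(\psi\circ\pi)^H_*\mu=\psi^H_*\nu$ for every finite $H\subset G$: since $\pi(T_hx)=S_h\pi(x)$ for a.e.\ $x$ and all $h\in H$, we have $(\psi\circ\pi)^H=\psi^H\circ\pi$ $\mu$-a.e., and $\pi_*\mu=\nu$. Because $d^H_{\sigma_i}(\cdot,\psi')$ sees its first argument only through this pushforward, the finite sets of maps being counted in the definitions of $h(\Sigma,T,\psi\circ\pi)$ and of $h(\Sigma,S,\psi)$ are literally identical, so $h(\Sigma,T,\psi\circ\pi)=h(\Sigma,S,\psi)$; and this equals $h(\Sigma,S)$ since $\psi$ is a finite-entropy generating observable for $S$.

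The second, and harder, ingredient is a general join bound: for finite observables $\phi_1:X\to A_1$ and $\phi_2:X\to A_2$, $h(\Sigma,T,\phi_1\vee\phi_2)\le h(\Sigma,T,\phi_1)+H(\phi_2)$. To prove it, fix $H$ and $\epsilon$; a map $\psi:\{1,\dots,m_i\}\to A_1\times A_2$ with $d^H_{\sigma_i}(\phi_1\vee\phi_2,\psi)\le\epsilon$ determines coordinate maps $\psi_1,\psi_2$, and passing to marginals in the $l^1$-bound shows both $d^H_{\sigma_i}(\phi_1,\psi_1)\le\epsilon$ and $\|(\psi_2)_*\zeta_i-(\phi_2)_*\mu\|_1\le\epsilon$. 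Hence the number of admissible $\psi$ is at most the number of admissible $\psi_1$ times the number of maps $\{1,\dots,m_i\}\to A_2$ whose empirical distribution is $\epsilon$-close to $(\phi_2)_*\mu$; the latter count is bounded by the method of types by $(m_i+1)^{|A_2|}\exp\!\big(m_i\sup_{\|q-(\phi_2)_*\mu\|_1\le\epsilon}H(q)\big)$. Taking $\tfrac1{m_i}\log$, then $\limsup_{i\to\infty}$, then $\inf_{\epsilon>0}$ (the supremum over $q$ tending to $H(\phi_2)$ by continuity of entropy on a finite simplex), and finally $\inf_H$ yields the bound. Applying it with $\phi_1=\psi\circ\pi$ and $\phi_2=\phi$ finishes the finite case: $h(\Sigma,T)=h(\Sigma,T,\phi\vee(\psi\circ\pi))\le h(\Sigma,T,\psi\circ\pi)+H(\phi)=h(\Sigma,S)+H(\phi)$.

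If $\phi$ or $\psi$ is only countably-valued I would reduce to the finite case via Definition~\ref{defn:infinite}: choosing finite quotients $\pi_n$ of $\phi$ and $\psi_n$ of $\psi$, the sequence $\rho_n=(\pi_n\circ\phi)\vee(\psi_n\circ\pi)$ is a legitimate approximating sequence for $\phi\vee(\psi\circ\pi)$, and the finite join bound together with $H(\pi_n\circ\phi)\le H(\phi)$ and the finite version of the second paragraph gives $h(\Sigma,T,\rho_n)\le h(\Sigma,T,\psi_n\circ\pi)+H(\pi_n\circ\phi)\le h(\Sigma,S,\psi_n)+H(\phi)$; letting $n\to\infty$ recovers the inequality. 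I expect essentially all the difficulty to sit in the join bound: keeping the $\epsilon$-dependent type-counting error uniform in $i$ so that it vanishes after $\inf_{\epsilon>0}$, and checking that the order of the two infima does not interfere with the additive $H(\phi_2)$ term; the remaining steps are soft, apart from routine bookkeeping in the countable reduction.
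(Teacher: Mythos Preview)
The paper does not prove this lemma; it merely quotes it as [Bo08b, Corollary 8.3] and immediately applies it together with Theorem~\ref{thm:main}. So there is no in-paper proof to compare against.

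That said, your argument is correct and is essentially the standard one. The two ingredients you isolate are exactly right: (i) the equality $h(\Sigma,T,\psi\circ\pi)=h(\Sigma,S,\psi)$ follows immediately from equivariance of $\pi$ and $\pi_*\mu=\nu$, since $d^H_{\sigma_i}$ depends on its first argument only through $(\cdot)^H_*\mu$; (ii) the join bound $h(\Sigma,T,\phi_1\vee\phi_2)\le h(\Sigma,T,\phi_1)+H(\phi_2)$ follows from the marginal inequality $d^H_{\sigma_i}(\phi_1,\psi_1)\le d^H_{\sigma_i}(\phi_1\vee\phi_2,\psi)$ and a type-counting bound on the number of $\psi_2$ with empirical law $\epsilon$-close to $(\phi_2)_*\mu$. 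Your handling of the $\inf_\epsilon$ is fine because both terms are monotone in $\epsilon$ and entropy is continuous on the finite simplex, so the infimum of the sum equals the sum of the limits. One small remark: when you marginalize to obtain $\|(\psi_2)_*\zeta_i-(\phi_2)_*\mu\|_1\le\epsilon$ you implicitly use that for any $h\in H$ the single-coordinate marginals satisfy $(\sigma_i(h)\psi_2)_*\zeta_i=(\psi_2)_*\zeta_i$ and $(T_h\phi_2)_*\mu=(\phi_2)_*\mu$, which is true since $\sigma_i(h)$ and $T_h$ are measure-preserving; you might say this explicitly rather than tacitly assuming $e\in H$. The countable reduction via Definition~\ref{defn:infinite} is routine as you indicate.
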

So theorem \ref{thm:main} implies:
\begin{thm}
Let $G=\langle s_1,\ldots, s_r\rangle$ be a free group on $r$ generators. Let $T$ be a m.p. action of $G$ and let $S$ be a factor of $T$. Assume there exists finite generating partitions for $T$ and $S$. Let $\alpha$ be a finite generating partition for $T$. Then
$$f(S) \ge f(T) - H(\alpha).$$
\end{thm}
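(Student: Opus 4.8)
The plan is to move the whole statement into the language of $\Sigma$-entropy, apply the Lemma recalled above (i.e.\ [Bo08b, corollary 8.3]), and convert back via Theorem~\ref{thm:main}. Throughout, let $\Sigma=\{\mu_i\}_{i=1}^\infty$ with $\mu_i$ the uniform probability measure on the homomorphisms $G\to\sym(i)$, exactly the sequence appearing in Theorem~\ref{thm:main}. The first step is to record that $\Sigma$ is asymptotically free, a hypothesis needed both for $h(\Sigma,\cdot)$ to be a well-defined invariant (Theorem~\ref{thm:Bo08b}, in its version for probability measures on homomorphisms) and for the Lemma to apply: for a random $\sigma_i$ the permutations $\sigma_i(s_1),\dots,\sigma_i(s_r)$ are i.i.d.\ uniform on $\sym(i)$, and for a fixed nontrivial reduced word $w$ one has $\bE\big[\,|\{1\le j\le i~:~\sigma_i(w)j=j\}|\,\big]=O(1)$ as $i\to\infty$ (a standard estimate on the expected number of fixed points of a word evaluated on independent uniform random permutations); taking $w=g_1^{-1}g_2$ for $g_1\ne g_2$ gives the defining limit.

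Next, let $\alpha$ be the finite generating partition for $T$ given in the statement, fix a finite generating partition $\beta$ for $S$ (one exists by hypothesis), and choose finite observables $\phi,\psi$ with $\bphi=\alpha$ and $\bar\psi=\beta$ (say $\phi(x)$ is the part of $\alpha$ containing $x$); then $\phi$ is generating for $T$ with $H(\phi)=H(\alpha)<\infty$, and likewise $\psi$ for $S$. For $T$ one runs the chain: $h(\Sigma,T)=h(\Sigma,T,\phi)$ by the definition of $h(\Sigma,T)$ (legitimate because $\phi$ is a finite-entropy generating observable and $\Sigma$ is asymptotically free), $h(\Sigma,T,\phi)=f(T,\phi)$ by Theorem~\ref{thm:main}, and $f(T,\phi)=f(T,\bphi)=f(T,\alpha)=f(T)$ by the definitions of $f(T,\phi)$ and of the $f$-invariant. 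The identical chain for $S$ gives $h(\Sigma,S)=f(S)$.

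It remains to apply the Lemma with $G$ (a free group, hence countable), the asymptotically free $\Sigma$, the factor $S$ of $T$, the finite-entropy generating partitions $\alpha$ (for $T$) and $\beta$ (for $S$), and the generating observable $\phi$ of $T$ with $H(\phi)<\infty$. It yields $h(\Sigma,S)\ge h(\Sigma,T)-H(\phi)$. Substituting $h(\Sigma,S)=f(S)$, $h(\Sigma,T)=f(T)$, and $H(\phi)=H(\alpha)$ gives the claimed inequality $f(S)\ge f(T)-H(\alpha)$.

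Essentially all the content sits in Theorem~\ref{thm:main} and in the cited Lemma, so this corollary is bookkeeping and I do not expect a real obstacle. The only honest verifications are (i) asymptotic freeness of the uniform $\Sigma$, which is classical, and (ii) that the Lemma, like Theorem~\ref{thm:Bo08b}, survives the passage from single homomorphisms to probability measures on homomorphisms --- of the same character as a remark already made in the paper. The single most delicate line is the identity $h(\Sigma,T)=h(\Sigma,T,\phi)$, which tacitly uses all three of: $\phi$ generating, $H(\phi)<\infty$, and $\Sigma$ asymptotically free.
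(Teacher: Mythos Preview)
Your proposal is correct and follows essentially the same route as the paper: the paper introduces the theorem with ``So theorem~\ref{thm:main} implies:'' and gives no further argument, so the intended proof is precisely the bookkeeping you spell out---identify $f(T)$ and $f(S)$ with $h(\Sigma,T)$ and $h(\Sigma,S)$ via Theorem~\ref{thm:main}, then invoke the Lemma. Your added remarks on the asymptotic freeness of the uniform $\Sigma$ and on transporting the Lemma from deterministic to random homomorphisms are details the paper leaves implicit.
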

In order to apply this to Bernoulli shifts, let us recall the definitions. Let $K$ be a finite or countable set and $\kappa$ a probability measure on $K$. Let $(K^G,\kappa^G)$ denote the product measure space. Define $T_g:K^G \to K^G$ by $T_g(x)(h)=x(hg)$. This defines a measure-preserving action of $G$ on $(K^G,\kappa^G)$. It is the {\em Bernoulli shift} over $G$ with base measure $\kappa$. In [Bo08a] it was shown that $f(T)=H(\kappa)$ where 
$$H(\kappa):=-\sum_{k\in K} \mu(\{k\}) \log(\mu(\{k\}).$$

Let $\alpha$ be the canonical partition of $K^G$. I.e., $\alpha=\{A_k~:~k\in K\}$ where $A_k=\{x \in K^G~:~x(e)=k\}$. Note $H(\alpha)=H(\kappa)=f(T)$. So the theorem above implies
\begin{cor}
If $S$ is a factor of the Bernoulli shift and if there exists a finite generating partition for $S$ then $f(S) \ge 0$.
\end{cor}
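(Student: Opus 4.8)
The plan is to read the corollary off the preceding theorem (the bound $f(S)\ge f(T)-H(\alpha)$, valid for every finite generating partition $\alpha$ of $T$) applied with $T$ the Bernoulli shift and $\alpha$ its canonical partition. First I would fix the Bernoulli shift $T$ on $(K^G,\kappa^G)$ and let $\alpha=\{A_k:k\in K\}$ with $A_k=\{x\in K^G:x(e)=k\}$. This $\alpha$ is generating, since the coordinate functions $x\mapsto x(g)$, $g\in G$, already separate points of $K^G$, so the smallest $G$-invariant $\sigma$-algebra containing $\alpha$ is the full measure algebra; and, as recalled just above the corollary, $H(\alpha)=H(\kappa)=f(T)$, the last equality being the computation of the $f$-invariant of a Bernoulli shift from [Bo08a].

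When $K$ is finite, $\alpha$ is a \emph{finite} generating partition for $T$, $S$ has a finite generating partition by hypothesis, so the preceding theorem applies verbatim and gives
$$f(S)\ \ge\ f(T)-H(\alpha)\ =\ f(T)-f(T)\ =\ 0.$$
This is the whole argument in the case of primary interest.

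For a countably infinite base $K$ with $H(\kappa)<\infty$ (so that the statement is not vacuous), $\alpha$ is no longer finite and the preceding theorem does not apply literally. Here I would instead invoke the Lemma above (corollary 8.3 of [Bo08b]) with the generating observable $\phi(x)=x(e)$, for which $H(\phi)=H(\kappa)<\infty$: it yields $h(\Sigma,S)\ge h(\Sigma,T)-H(\phi)$ for $\Sigma=\{\mu_i\}$ the uniform measures on $\hom(G,\sym(i))$ of theorem \ref{thm:main}. Theorem \ref{thm:main} applied to a finite generating partition of $S$ gives $h(\Sigma,S)=f(S)$, so it remains to identify $h(\Sigma,T)$ with $H(\kappa)$. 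Since theorem \ref{thm:main} is stated only for finite observables, I would obtain this via definition \ref{defn:infinite}: choosing finite quotients $\pi_n:K\to K_n$ as there,
$$h(\Sigma,T)=h(\Sigma,T,\phi)=\lim_{n\to\infty}h(\Sigma,T,\pi_n\circ\phi)=\lim_{n\to\infty}f(T,\pi_n\circ\phi)=f(T)=H(\kappa),$$
the penultimate equality being continuity of the $f$-invariant when a generating partition is refined toward the full one (from [Bo08a, Bo08c]). Combining, $f(S)=h(\Sigma,S)\ge H(\kappa)-H(\kappa)=0$.

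The only step I expect to be a real obstacle is this last identification $h(\Sigma,T)=H(\kappa)$ for an infinite base, which forces one to push theorem \ref{thm:main} past finite observables through the truncation of definition \ref{defn:infinite} together with a continuity property of $f$. If the corollary is read as a statement about Bernoulli shifts with finite base — the case in which the preceding theorem applies directly — there is no difficulty at all and the proof is the two-line computation above.
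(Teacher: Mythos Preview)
Your proof in the finite-base case is exactly the paper's argument: apply the preceding theorem with $\alpha$ the canonical partition and use $H(\alpha)=H(\kappa)=f(T)$ to get $f(S)\ge f(T)-H(\alpha)=0$. The paper gives nothing more than this two-line deduction; your additional discussion of the countably infinite base (where the canonical partition is not finite and the theorem as stated does not literally apply) is more careful than the paper, which glosses over that gap, and your proposed workaround via the lemma, definition \ref{defn:infinite}, and the identity $f(T,\pi_n\circ\phi)=H((\pi_n)_*\kappa)\to H(\kappa)$ for the Bernoulli factors is a reasonable way to close it.
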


It is unknown whether there exists a nontrivial factor $S$ of a Bernoulli shift over a free group $G$ such that $f(S)=0$.

In [Bo08c], classical Markov chains are generalized to Markov chains over free groups. An explicit example was given of a Markov chain with finite negative $f$-invariant. It follows that this Markov chain cannot be measurably conjugate to a factor of a Bernoulli shift. It can be shown that this Markov chain is uniformly mixing. To contrast this with the classical case, recall that Friedman and Ornstein proved in [FO70] that every mixing Markov chain over the integers is isomorphic to a Bernoulli shift. 

Now we can construct a mixing Markov chain with positive $f$-invariant that is not isomorphic to a Bernoulli shift as follows. Let $T$ denote a mixing Markov chain with negative $f$-invariant. Let $S$ denote a Bernoulli shift with $f(S)>-f(T)$. Consider the product action $T \times S$. A short computation reveals that, in general, $f(T\times S) = f(T) + f(S)$. Therefore $T\times S$ has positive $f$-invariant. It can be shown that $T\times S$ is a mixing Markov chain. However it cannot be isomorphic to a Bernoulli shift since it factors onto $T$ which has negative $f$-invariant.





\section{Proof of theorem  \ref{thm:second}}\label{sec2}
Let $G=\langle s_1,\ldots, s_r\rangle$ be a free group or free semigroup of rank $r$. Let $G\curvearrowright^T (X,\mu)$.  Let $\phi:X \to A$ be a finite observable.

We will need to consider certain perturbations of the measure $\mu$ with respect to the given observable $\phi:X \to A$. For this purpose we introduce the notion of weights on the graph $\cG=(V,E)$ that is defined as follows. The vertex set $V$ equals $A$. For every $a,b\in A$ and every $i \in \{1,\ldots, r\}$ there is a {\em directed} edge from $a$ to $b$ labeled $i$. This edge is denoted $(a,b;i)$. We allow the possibility that $a=b$. A {\em weight} on $\cG$ is a function $W:V \sqcup E \to [0,1]$ satisfying:
\begin{eqnarray*}
W(a) = \sum_{b \in A} W(a,b;i) &=& \sum_{b \in A} W(b,a;i) ~\forall i=1\ldots r, ~\forall a\in A,\\
1 &=& \sum_{a \in A} W(a).
\end{eqnarray*}
For example, 
\begin{eqnarray*}
W_\mu(a)&:=&\mu(\phi^{-1}(a)),\\
W_\mu(a,b;i)&:=& \mu\big( \{x \in X~:~ \phi(x)=a, \phi(T_{s_i}x) = b \}\big)
\end{eqnarray*}
is the weight associated to $\mu$. If $\sigma:G \to \sym(n)$ is a homomorphism and $\psi:\{1,\ldots,n\} \to A$ is a function then we define the weight $W_{\sigma,\psi}$ by
\begin{eqnarray*}
W_{\sigma,\psi}(a)&:=&|\psi^{-1}(a)|/n,\\
W_{\sigma,\psi}(a,b;i)&:=& \Big|\big\{ j~:~ \psi(j)=a, \psi\big(\sigma(s_i)j\big) = b \big\}\Big|/n.
\end{eqnarray*}
Note that 
$$d^*_\sigma(\phi,\psi) = \sum_{i=1}^r \sum_{a,b\in A} \big|W_\mu(a,b;i) - W_{\sigma,\psi}(a,b;i)\big|.$$
So given two weights $W_1$, $W_2$ define
$$d_*(W_1,W_2) := \sum_{i=1}^r \sum_{a,b\in A} \big|W_1(a,b;i) - W_2(a,b;i)\big|.$$


\begin{prop}
Let $n$ be a positive integer. Let $W$ be a weight. Suppose that $W(a,b;i)n \in \Z$ for every $a,b \in A$ and every $i=1\ldots r$. 
If $\sigma:G \to \sym(n)$ is chosen uniformly at random then
\begin{eqnarray*}
\bE\Big[\big| \{\psi:\{1,\ldots,n\}\to A~:~d_*(W,W_{\sigma,\psi})=0\}\big|\Big] =  \frac{n!^{1-r} \prod_{a\in A} (nW(a))!^{2r-1}}{ \prod_{i=1}^r \prod_{a,b \in A} (nW(a,b;i))!}.
\end{eqnarray*}
\end{prop}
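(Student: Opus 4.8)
The plan is to count exactly, for a fixed weight $W$ with the given integrality hypothesis, the pair of a random homomorphism $\sigma$ and a labeling $\psi:\{1,\ldots,n\}\to A$ satisfying $W_{\sigma,\psi}=W$ on all the edge-data (equivalently $d_*(W,W_{\sigma,\psi})=0$; note that matching the edge-weights forces matching the vertex-weights as well, since $W(a)=\sum_b W(a,b;i)$). The key observation is that a uniformly random homomorphism $\sigma:G\to\sym(n)$ is the same thing as an $r$-tuple $(\sigma(s_1),\ldots,\sigma(s_r))$ of independent uniformly random permutations in $\sym(n)$ (since $G$ is free on $s_1,\ldots,s_r$; in the semigroup case one takes independent uniform elements of the full transformation monoid, and the count of those hitting a prescribed edge-profile is handled the same way). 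Therefore
$$\bE\Big[\big|\{\psi:d_*(W,W_{\sigma,\psi})=0\}\big|\Big] = \sum_{\psi} \prod_{i=1}^r \bP\big(W_{\sigma(s_i),\psi}(\cdot,\cdot;i)=W(\cdot,\cdot;i)\big),$$
where the events for different $i$ are independent and $W_{\sigma(s_i),\psi}$ denotes the single-generator edge-profile. So the problem decouples into: (i) summing over $\psi$ with the correct vertex-weights, and (ii) for each fixed $\psi$ of that type and each $i$, computing the probability that a uniform random permutation $\tau=\sigma(s_i)$ sends exactly $nW(a,b;i)$ of the points in $\psi^{-1}(a)$ into $\psi^{-1}(b)$, for all $a,b$.

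For step (ii), fix $\psi$ with $|\psi^{-1}(a)| = nW(a) =: n_a$ for each $a$. A uniform random permutation $\tau$ restricted to the bipartite picture "source class $a$ $\to$ target class $b$" has the following structure: to specify $\tau$ it is equivalent to choose, for each source class $a$, an ordered partition of $\psi^{-1}(a)$ into blocks of sizes $(nW(a,b;i))_{b\in A}$ destined for each target $b$, and then a bijection from the union of $a$-to-$b$ blocks onto $\psi^{-1}(b)$ for each $b$; the number of permutations $\tau$ with the prescribed profile $m_{ab}:=nW(a,b;i)$ is
$$\prod_{a\in A}\binom{n_a}{(m_{ab})_{b\in A}}\ \cdot\ \prod_{b\in A} n_b!\ =\ \frac{\prod_{a\in A} n_a!}{\prod_{a,b\in A} m_{ab}!}\ \cdot\ \prod_{b\in A} n_b!,$$
using $\sum_b m_{ab}=n_a$ (row sums) and $\sum_a m_{ab}=n_b$ (column sums), both of which are exactly the two weight identities $W(a)=\sum_b W(a,b;i)=\sum_b W(b,a;i)$. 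Dividing by $n!$ gives the probability for generator $i$:
$$\bP_i = \frac{1}{n!}\cdot\frac{\big(\prod_{a} n_a!\big)^2}{\prod_{a,b} (nW(a,b;i))!}.$$

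Step (i) is a multinomial count: the number of $\psi:\{1,\ldots,n\}\to A$ with $|\psi^{-1}(a)|=n_a$ for all $a$ is $\binom{n}{(n_a)_{a\in A}} = n!/\prod_a (nW(a))!$; and for every such $\psi$ the product $\prod_i \bP_i$ is the same, namely
$$\prod_{i=1}^r \bP_i = \frac{\big(\prod_a (nW(a))!\big)^{2r}}{n!^{\,r}\ \prod_{i=1}^r\prod_{a,b}(nW(a,b;i))!}.$$
Multiplying the count from (i) by this common value yields
$$\frac{n!}{\prod_a (nW(a))!}\cdot\frac{\big(\prod_a (nW(a))!\big)^{2r}}{n!^{\,r}\ \prod_{i}\prod_{a,b}(nW(a,b;i))!} = \frac{n!^{\,1-r}\,\prod_{a\in A}(nW(a))!^{\,2r-1}}{\prod_{i=1}^r\prod_{a,b\in A}(nW(a,b;i))!},$$
which is the claimed formula.

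The main obstacle, and the only place real care is needed, is step (ii): verifying that the combinatorial model "choose ordered partitions of source fibers, then bijections onto target fibers" is in bijection with the set of permutations having the prescribed profile, and in particular that this is consistent (nonempty) precisely because of the two balance conditions built into the definition of a weight — the row sums of $(m_{ab})_b$ give $n_a$ and the column sums of $(m_{ab})_a$ give $n_b$. One should also note that the hypothesis $nW(a,b;i)\in\Z$ is exactly what makes the event $\{d_*(W,W_{\sigma,\psi})=0\}$ attainable at all, and that $nW(a)=\sum_b nW(a,b;i)\in\Z$ follows automatically. The remaining steps — the decoupling over generators via freeness/independence, and the two multinomial identities — are routine bookkeeping.
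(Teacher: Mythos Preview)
Your proof is correct and follows essentially the same route as the paper's: linearity of expectation over $\psi$, independence of the events across generators (since $\sigma$ is an $r$-tuple of independent uniform permutations), a multinomial count of the labelings $\psi$ with the prescribed fiber sizes, and a direct combinatorial count of permutations with a prescribed bipartite profile---your parametrization (partition each source fiber, then biject onto each target fiber) is a cosmetic reorganization of the paper's (partition both source and target fibers, then biject block-to-block), yielding the identical factor $\dfrac{(\prod_a n_a!)^2}{n!\prod_{a,b}(nW(a,b;i))!}$. One small slip: in the semigroup case the paper still takes $\sigma:G\to\sym(n)$, so the generators are still sent to uniform \emph{permutations}, not to elements of the full transformation monoid; your parenthetical remark is therefore off, but irrelevant to the argument.
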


\begin{proof}
Note that if $d_*(W,W_{\sigma,\psi})=0$ then for all $a\in A$, $W_{\sigma,\psi}(a)=W(a)$. Equivalently,
\begin{eqnarray}\label{eqn1}
|\psi^{-1}(a)| = n W(a) ~\forall ~ a \in A.
\end{eqnarray}

The number of functions $\psi:\{1,\ldots, n\} \to A$ that satisfy this requirement is
$$\frac{n!}{ \prod_{a \in A} (n W(a))!}.$$
If $\psi_1, \psi_2$ are two different functions that satisfy equation \ref{eqn1} then there is a permutation $\tau \in \sym(n)$ such that $\psi_1=\psi_2 \circ \tau$. If $\sigma^\tau: G\to \sym(n)$ is the homomorphism defined by $\sigma^\tau(g)=\tau\sigma(g)\tau^{-1}$ then  $W_{\sigma,\psi_1}=W_{\sigma^\tau,\psi_2}$. Since $\sigma: G\to \sym(n)$ is chosen uniformly at random, this implies that the probability that $d_*(W,W_{\sigma,\psi_1})=0$ is the same as the probability that $d_*(W,W_{\sigma,\psi_2})=0$. So fix a particular function $\psi_0$ satisfying equation \ref{eqn1}. Then

\begin{eqnarray*}
\bE\Big[\big| \{\psi:\{1,\ldots,n\}\to A~:~d_*(W,W_{\sigma,\psi})=0\}\big|\Big]=\frac{n!\textrm{Prob}[d_*(W,W_{\sigma,\psi_0})=0]}{ \prod_{a \in A} (n W(a))! }.
\end{eqnarray*}

For any two weights $W_1,W_2$ and $1\le i \le r$, define
$$d_i(W_1,W_2) := \sum_{a,b \in A} \big|W_1(a,b;i) - W_2(a,b;i)\big|.$$
So $d_*=\sum_{i=1}^r d_i$.  

The homomorphism $\sigma:G \to \sym(n)$ is determined by its values $\sigma(s_1),\ldots, \sigma(s_r)$. The event $d_i(W,W_{\sigma,\psi_0})=0$ is determined by $\sigma(s_i)$. So if $i\ne j$ then the events $d_i(W,W_{\sigma,\psi_0})=0$ and $d_j(W,W_{\sigma,\psi_0})=0$ are independent. Therefore,
\begin{eqnarray}\label{eqn2}
\bE\Big[\big| \{\psi:\{1,\ldots,n\}\to A~:~d_*(W,W_{\sigma,\psi})=0\}\big|\Big]=\frac{n!\prod_{i=1}^r \textrm{Prob}[d_i(W,W_{\sigma,\psi_0})=0]}{ \prod_{a \in A} (n W(a))!}.
\end{eqnarray}

Fix $i\in \{1,\ldots, r\}$. We will compute $\textrm{Prob}[d_i(W,W_{\sigma,\psi_0})=0]$. The element $\sigma(s_i)$ induces a pair of partitions $\alpha,\beta$ of $\{1,\ldots,n\}$ as follows. $\alpha:=\{P_{a,b}~|~a,b \in A\}$ and $\beta:=\{Q_{a,b}~|~a,b \in A\}$ where 
\begin{eqnarray*}
P_{a,b}&=&\{j~:~\psi_0(j)=a \textrm{ and } \psi_0(\sigma(s_i)j)=b\}\\
Q_{a,b}&=&\{j~:~\psi_0(j)=b \textrm{ and } \psi_0(\sigma(s_i)^{-1}j)=a\}.
\end{eqnarray*}
Also there is a bijection from $M_{a,b}:P_{a,b} \to Q_{a,b}$ defined by $M_{a,b}(j)=\sigma(s_i)j$. Conversely, $\sigma(s_i)$ is uniquely determined by these partitions and bijections.

Note that $|P_{a,b}|=|Q_{a,b}|=nW_{\sigma,\psi_0}(a,b;i)$. Thus $d_i(W,W_{\sigma,\psi_0})=0$ if and only $|P_{a,b}|=|Q_{a,b}| = nW(a,b;i)$ for all $a,b\in A$. If this occurs then $|\cup_{b\in A} P_{a,b}| = nW(a)$ for all $a\in A$. So the number of pairs of partitions $\alpha, \beta$ that satisfy this requirement is 
$$\frac{\prod_{a\in A} (nW(a))!^2}{\prod_{a,b \in A} \big(nW(a,b;i))!\big)^{2}}.$$

Given such a pair of partitions, the number of collections of bijections $M_{a,b}:P_{a,b}\to Q_{a,b}$ (for $a,b\in A$) equals $\prod_{a,b\in A} (nW( a, b ;i))!.$ Since there are $n!$ elements in $\sym(n)$ it follows that
$$\textrm{Prob}[d_i(W,W_{\sigma,\psi_0})=0]=\frac{\prod_{a\in A} (nW(a))!^2}{n! \prod_{a,b \in A} (nW(a,b;i))!}.$$
The proposition now follows from this equality and equation \ref{eqn2}.
\end{proof}

Let $\weight$ be the set of all weights on $\cG$. It is a compact convex subset of $\R^d$ for some $d>0$. Define $F:\weight \to \R$ by 
$$F(W):=-\Big(\sum_{i=1}^r \sum_{a,b\in A} W(a,b;i)\log(W(a,b;i))\Big) + (2r-1)\sum_{a\in A} W(a)\log(W(a)).$$
We follow the usual convention that $0\log(0)=0$. Observe that $F(T,\phi)=F(W_\mu)$. 

Given a weight $W$, let $\denom_W$ denote the smallest positive integer such that $W(a,b;i)\denom_W \in \Z$ for all $a,b\in A$ and for all $i\in \{1,\ldots,r\}$. If no such integer exists then set $\denom_W:=+\infty$. If $p$ and $q$ are integers, $p\ne 0$ and $\frac{q}{p} \in \Z$ then we write $p \mid q$. Otherwise we write $p \nmid q$.

\begin{lem}\label{lem:continuous}
$F:\weight \to \R$ is continuous. Also, there exist constants $0<c_1<c_2$ and $p_1<p_2$ such that for every weight $W$ with $\denom_W<\infty$ and every $n \ge 1$ such that $\denom_W \mid n$, if $\sigma:G \to \sym(n)$ is chosen uniformly at random then
\begin{eqnarray*}
c_1n^{p_1} e^{F(W)n} \le  \bE\Big[\Big| \{\psi:\{1,\ldots,n\}\to A~:~d_*(W,W_{\sigma,\psi})=0\}\Big|\Big] \le c_2n^{p_2} e^{F(W)n}.
\end{eqnarray*}
\end{lem}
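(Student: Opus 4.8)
The plan is to read the estimate off the exact formula of the Proposition using Stirling's approximation; the continuity of $F$ is elementary and I will dispose of it first. Since the map $t \mapsto t\log t$ extends continuously to $[0,1]$ with value $0$ at $t = 0$, and since the coordinate functions $W \mapsto W(a,b;i)$ and $W \mapsto W(a)$ are continuous on the compact convex set $\weight$, the function $F$ --- a finite linear combination of the compositions $W \mapsto W(a,b;i)\log W(a,b;i)$ and $W \mapsto W(a)\log W(a)$ --- is continuous.

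For the quantitative bound, fix a weight $W$ with $\denom_W < \infty$ and an integer $n$ with $\denom_W \mid n$; then every $nW(a,b;i)$ is a nonnegative integer, and hence so is each $nW(a) = \sum_b nW(a,b;i)$, so the Proposition applies and gives the exact value $E_n(W)$ of the expectation. I will take the logarithm of the product formula and insert Stirling in the form $\log(m!) = m\log m - m + \tfrac12\log(2\pi m) + \theta_m$ with $0 \le \theta_m \le \tfrac1{12}$ for integers $m \ge 1$ (and $\log(0!) = 0 = \lim_{t\to 0^+}(t\log t - t)$). This separates $\log E_n(W)$ into an ``entropy'' part built from the terms $m\log m - m$, a ``polynomial'' part built from the terms $\tfrac12\log(2\pi m)$, and a remainder part built from the $\theta_m$.

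The key step --- and the only one with real content --- is that the entropy part collapses to exactly $nF(W)$. Writing each $m \in \{n\}\cup\{nW(a)\}\cup\{nW(a,b;i)\}$ and expanding, one uses the two defining relations $\sum_a W(a) = 1$ and $\sum_{a,b}W(a,b;i) = 1$ (each $i$) to check that the coefficients of $n\log n$ and of $n$ vanish --- they equal $(1-r)+(2r-1)-r$ and its negative --- leaving precisely $n\big[(2r-1)\sum_a W(a)\log W(a) - \sum_{i,a,b}W(a,b;i)\log W(a,b;i)\big] = nF(W)$; terms with vanishing weight contribute nothing to either side, consistently with $0\log 0 = 0$. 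For the polynomial and remainder parts I will use that every $m$ occurring as the argument of a factorial with positive weight satisfies $1 \le m \le n$, so $\tfrac12\log(2\pi) \le \tfrac12\log(2\pi m) \le \tfrac12\log(2\pi n)$ and $0\le\theta_m\le\tfrac1{12}$; since there are at most $N := 1 + |A| + r|A|^2$ factorials, the signed sums of these two parts are bounded in absolute value by $N\log(2\pi n)$ and $N/12$. Combining, there is a constant $C = C(r,|A|) \ge 1$ with $\big|\log E_n(W) - nF(W)\big| \le C(1+\log n)$ for all admissible $(W,n)$, and exponentiating yields the lemma with $c_1 = e^{-C}$, $c_2 = e^{C}$, $p_1 = -C$, $p_2 = C$.

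Thus the only possible obstacle is the algebraic cancellation in the entropy part, which genuinely uses both linear constraints defining a weight; the rest is routine bookkeeping, the sole point needing care being to apply Stirling only to factorials with positive argument and to confirm that the zero-weight terms cause no trouble.
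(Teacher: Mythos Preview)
Your approach is exactly the paper's: invoke the exact count from the preceding Proposition and apply Stirling's approximation, with the algebraic cancellation of the $n\log n$ and $n$ terms (via $\sum_a W(a)=1$ and $\sum_{a,b}W(a,b;i)=1$) producing $nF(W)$. One small bookkeeping slip: your bound $N\log(2\pi n)$ ignores the exponents $|1-r|$ and $2r-1$ on the factorials, so the correct constant is $|1-r|+(2r-1)|A|+r|A|^2$ rather than $1+|A|+r|A|^2$; this does not affect the conclusion, which only needs some $C=C(r,|A|)$.
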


\begin{proof}
It is obvious that $F$ is continuous. The second statement follows from the previous proposition and Stirling's approximation. The constants depend only on $|A|$ and the rank $r$ of $G$.
\end{proof}

\begin{lem}
There exists a constant $k>0$ such that the following holds. Let $W$ be a weight and let $n>0$ be a positive integer. Then there exists a weight $\widetilde{W}$ such that $\denom_{\widetilde{W}}<\infty$, $\denom_{\widetilde{W}} |n $ and $d_*(W,\widetilde{W})<k/n$.  
\end{lem}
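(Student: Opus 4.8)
The plan is to read this as a rounding statement in the polytope $\weight$: given $W\in\weight$ and $n$, produce a point of $\weight$ all of whose edge-values lie in $\frac1n\Z$ and that is within $d_*$-distance $O(1/n)$ of $W$. The structural obstacle — and essentially the only one — is that a weight is not an arbitrary $r$-tuple of matrices $\big(W(a,b;i)\big)_{a,b}$: the $r$ matrices are forced to share the same vertex-marginals $W(a)$. Hence one cannot round the $r$ matrices independently, each with its own nearby integer marginals; one must first commit to a common integer marginal vector and then round each layer to that.

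First I would round the vertex values. Applying the standard sum-preserving rounding of a real vector to an integer vector (take the floor of each $nW(a)$, then raise the coordinates with the largest fractional parts until the total $n$ is restored) yields integers $M(a)\ge 0$ with $\sum_{a\in A}M(a)=n$ and $|M(a)-nW(a)|<1$ for all $a$; note $M(a)=nW(a)$ whenever $nW(a)\in\Z$, in particular when $W(a)=0$. The vertex-values of $\widetilde W$ will be $M(a)/n$.

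Next, fix a label $i$ and put $X_i(a,b):=nW(a,b;i)$; this is a nonnegative matrix whose row sums and column sums all equal $nW(a)$, hence lie within $1$ of the integer targets $M(a)$. I want a nonnegative integer matrix $Y_i$ with $\sum_b Y_i(a,b)=\sum_b Y_i(b,a)=M(a)$ for every $a$ and with $\|X_i-Y_i\|_1$ bounded in terms of $|A|$ only. I would get there in two moves. (i) Interpolate the marginals one side at a time. To pass from row sums $nW(\cdot)$ to row sums $M(\cdot)$ with the column sums held fixed: scale down each row whose target is strictly below its current sum by the corresponding factor — legitimate precisely because such a row has strictly positive current sum, which is exactly why $nW(a)=0$ never causes a problem — record the mass removed from each column, and redistribute that mass among the rows whose target exceeds their current sum so that each such row gains exactly the required amount; the $\ell^1$-displacement is $\sum_a|M(a)-nW(a)|<|A|$. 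A symmetric move then corrects the column sums. The result is a nonnegative real matrix $X_i'$ with exact integer marginals $M(\cdot)$ and $\|X_i'-X_i\|_1<2|A|$. (ii) Round $X_i'$ to an integer matrix with the same marginals: the set of $Y$ with $\lfloor X_i'(a,b)\rfloor\le Y(a,b)\le\lceil X_i'(a,b)\rceil$ for all $a,b$, $\sum_b Y(a,b)=M(a)$ for all $a$, and $\sum_a Y(a,b)=M(b)$ for all $b$ is nonempty (it contains $X_i'$), bounded, and cut out by a totally unimodular system (box constraints together with the vertex--edge incidence matrix of a bipartite graph), so it has an integer vertex $Y_i$, and $|Y_i(a,b)-X_i'(a,b)|\le 1$ gives $\|Y_i-X_i'\|_1\le|A|^2$. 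Thus $\|Y_i-X_i\|_1<|A|^2+2|A|$.

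Finally, set $\widetilde W(a,b;i):=Y_i(a,b)/n$ and $\widetilde W(a):=M(a)/n$. The row- and column-sum identities for the $Y_i$ together with $\sum_a M(a)=n$ show $\widetilde W\in\weight$; every edge-value lies in $\frac1n\Z$, so $\denom_{\widetilde W}\mid n$ and in particular $\denom_{\widetilde W}<\infty$; and $d_*(W,\widetilde W)=\frac1n\sum_{i=1}^r\|X_i-Y_i\|_1<\frac{r(|A|^2+2|A|)}{n}$, so any constant $k$ depending only on $|A|$ and $r$ and exceeding $r(|A|^2+2|A|)$ works. The step I expect to require the most care is move (i): keeping the matrix nonnegative while keeping the $\ell^1$-displacement $O(|A|)$, which is what the scale-down-and-redistribute trick — together with the observation that a row or column whose target is below its present sum must have positive present sum — is there to guarantee.
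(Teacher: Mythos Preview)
Your argument is correct and takes a genuinely different route from the paper's. The paper does something far more direct: it fixes a distinguished $a_0\in A$, sets $\widetilde W(b):=\lfloor nW(b)\rfloor/n$ and $\widetilde W(b,c;i):=\lfloor nW(b,c;i)\rfloor/n$ for all $b,c\in A\setminus\{a_0\}$, and then \emph{defines} every entry involving $a_0$ so as to force the marginal identities to hold (so $a_0$ absorbs all rounding error); a direct inspection then gives $d_*(W,\widetilde W)\le r|A|^2/n$. Your approach instead first commits to a common integer marginal vector $M$, interpolates each layer's marginals to $M$ by the scale-down-and-redistribute move (which you were right to single out as the step needing care), and finally rounds each layer inside its transportation polytope via total unimodularity. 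What you gain is a guaranteed nonnegative $\widetilde W$: the paper checks the linear constraints but never checks $\widetilde W\ge 0$, and in fact its slack entry $\widetilde W(a_0,a_0;i)$ can be negative (take $|A|=3$, $W(a_0)=0.01$, $W(a_1)=0.5$, $W(a_2)=0.49$, the four entries $W(a_j,a_k;i)$ with $j,k\in\{1,2\}$ each equal to $0.245$, and $n=100$; one computes $\widetilde W(a_0,a_0;i)=-0.02$). What the paper gains is brevity and no appeal to any polytope machinery. For the only use made of the lemma---finding, for each large $n$, some $\widetilde W$ with $\denom_{\widetilde W}\mid n$ close to a fixed $W_\mu$---this defect is harmless, but as a proof of the lemma exactly as stated your argument is the more careful one.
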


\begin{proof}

Choose $a_0 \in A$. For $b, c \in A-\{a_0\}$ and $i\in \{1,\ldots, r\}$ define

\begin{eqnarray*}
\widetilde{W}(b) &:=& \frac{\lfloor W(b)n\rfloor }{n.}\\
\widetilde{W}(a_0)&:=&1- \sum_{b \in A-\{a_0\}} \widetilde{W}(b).\\
\widetilde{W}(b,c;i) &:=& \frac{ \lfloor W(b,c;i)n\rfloor }{n}.\\
\widetilde{W}(a_0,b;i) &:=& \widetilde{W}(b) - \sum_{a \in A-\{a_0\}} \widetilde{W}(a,b;i).\\
\widetilde{W}(b, a_0;i) &:=& \widetilde{W}(b) - \sum_{a \in A-\{a_0\}} \widetilde{W}(b,a;i).\\
\widetilde{W}(a_0,a_0;i) &:=& \widetilde{W}(a_0) - \sum_{b\in A-\{a_0\}} \widetilde{W}(a_0,b;i).
\end{eqnarray*}
Let us check that $\widetilde{W}$ is a weight. It is clear that $\sum_{a\in A} \widetilde{W}(a)=1$. If $b \in A-\{a_0\}$ then $\widetilde{W}(b)=\sum_{a\in A} \widetilde{W}(a,b;i) = \sum_{a\in A} \widetilde{W}(b,a;i).$ It is immediate that $\widetilde{W}(a_0)= \sum_{b\in A} \widetilde{W}(a_0,b;i).$ Also
\begin{eqnarray*}
\sum_{b\in A} \widetilde{W}(b,a_0;i) &=& \widetilde{W}(a_0,a_0;i)+\sum_{b\in A-\{a_0\}} \widetilde{W}(b,a_0;i)\\
&=& \widetilde{W}(a_0) - \sum_{b\in A-\{a_0\}} \widetilde{W}(a_0,b;i) + \sum_{b\in A-\{a_0\}} \widetilde{W}(b,a_0;i)\\
&=&\widetilde{W}(a_0) + \sum_{b\in A-\{a_0\}}  \widetilde{W}(b,a_0;i) - \widetilde{W}(a_0,b;i)\\
&=&\widetilde{W}(a_0) +  \sum_{b\in A-\{a_0\}}\Big( \widetilde{W}(b) - \sum_{a \in A-\{a_0\}} \widetilde{W}(b,a;i)\Big) - \Big( \widetilde{W}(b) - \sum_{a \in A-\{a_0\}} \widetilde{W}(a,b;i)\Big)\\
&=&\widetilde{W}(a_0).
\end{eqnarray*}
This proves that $\widetilde{W}$ is a weight. It is clear that $\denom_{\widetilde{W}}<\infty$ and $\denom_{\widetilde{W}} | n$. Lastly observe that if $a,b \in A-\{a_0\}$ then $|W(a,b;i)-\widetilde{W}(a,b;i)| \le 1/n$. Since $|W(b)-\widetilde{W}(b)|\le 1/n$ too, $|W(a_0,b;i)-\widetilde{W}(a_0,b;i)| \le |A|/n$ and  $|W(b,a_0;i)-\widetilde{W}(b,a_0;i)| \le |A|/n$. Since $|W(a_0)-\widetilde{W}(a_0)|\le |A|/n$, $|W(a_0,a_0;i) - \widetilde{W}(a_0,a_0;i)| \le |A|^2/n$. Thus $d_*(W,\widetilde{W}) \le r|A|^2/n$. 
\end{proof}

We are now ready to prove theorem \ref{thm:second}.



\begin{proof}[Proof of theorem \ref{thm:second}]
 Recall that $\phi:X \to A$ is an observable and $A$ is a finite set. Let $n\ge 0$ and let $\sigma_n:G \to \sym(n)$ be a homomorphism chosen uniformly at random. Given a weight $W$, let 
$$Z_n(W):=\big| \{\psi:\{1,\ldots,n\} \to A~:~ d_*(W_{\sigma_n,\psi}, W) = 0\}\big|.$$
For any $\epsilon>0$,
\begin{eqnarray}\label{eqn:what}
\bE\Big[\big| \{\psi:\{1,\ldots,n\}\to A~:~d^*_{\sigma_n}(\phi,\psi)\le \epsilon\}\big|\Big] = \sum_{W:~d_*(W,W_\mu)\le\epsilon} \bE[Z_n(W)].
\end{eqnarray}
Let $\delta>0$. Since $F:\weight \to \R$ is continuous, there exists $\epsilon_0>0$ such that if $d_*(W,W_\mu)\le\epsilon_0$ then $|F(W)-F(W_\mu)| <\delta$. So let us fix $\epsilon$ with $0<\epsilon<\epsilon_0$.

By the previous lemma, if $n$ is sufficiently large then there exists a weight $W$ such that $d_*(W,W_\mu) \le \epsilon$ and $\denom_W \mid n$. Lemma \ref{lem:continuous} implies
\begin{eqnarray}\label{eqn:a1}
\bE\Big[\big| \{\psi:\{1,\ldots,n\}\to A~:~d^*_{\sigma_n}(\phi,\psi)\le \epsilon\}\big|\Big] \ge \bE[Z_n(W)] \ge c_1n^{p_1}e^{F(W_\mu)n-\delta n}
\end{eqnarray}
where $c_1>0$ and $p_1$ are constants. 

If $W$ is a weight such that $\denom_W \nmid n$ then $Z_n(W)=0$. If $\denom_W \mid n$ then $W(a,b;i) \in \Z[1/n]$ for all $a,b\in A$ and $i\in\{1,\ldots,r\}$. The space of all weights lies inside the cube $[0,1]^d \subset \R^d$ for some $d$. So the number of weights $W$ such that $Z_n(W)\ne 0$ is at most $n^d$. Lemma \ref{lem:continuous} and equation \ref{eqn:what} now imply that
\begin{eqnarray}\label{eqn:a2}
\bE\Big[\big| \{\psi:\{1,\ldots,n\}\to A~:~d^*_{\sigma_n}(\phi,\psi)\le \epsilon\}\big|\Big] \le c_2n^{p_2+d}e^{F(W_\mu)n+\delta n}.
\end{eqnarray}
Here $c_2>0$ and $p_2$ are constants. Equations \ref{eqn:a1} and \ref{eqn:a2} imply
$$\limsup_{n\to\infty} \Big|\frac{1}{n}\log\Big(\bE\Big[\big| \{\psi:\{1,\ldots,n\}\to A~:~d^*_{\sigma_n}(\phi,\psi)\le \epsilon\}\big|\Big]\Big)-F(W_\mu)\Big|\le \delta.$$
Since $\delta$ is arbitrary, it follows that
$$\inf_{\epsilon >0} \lim_{n\to\infty} \frac{1}{n}\log\Big(\bE\Big[\big| \{\psi:\{1,\ldots,n\}\to A~:~d^*_{\sigma_n}(\phi,\psi)\le \epsilon\}\big|\Big]\Big)=F(W_\mu)=F(T,\phi).$$

\end{proof}

\section{Proof of theorem \ref{thm:main}}\label{sec3}

As in the statement of theorem \ref{thm:main}, let $G=\langle s_1,\ldots, s_r\rangle$ be a free group or free semigroup of rank $r\ge 1$. Let $G \curvearrowright^T (X,\mu)$. Let $\phi:X \to A$ be a finite observable. Let $\Sigma=\{\mu_i\}_{i=1}^\infty$ where each $\mu_i$ is the uniform probability measure on the set of homomorphisms from $G$ to $\sym(i)$. Let $\sigma_i:G \to \sym(i)$ be a homomorphism chosen uniformly at random among all homomorphisms of $G$ into $\sym(i)$. Theorem \ref{thm:main} is an immediate consequence of the next two propositions.
\begin{prop}
$$h(\Sigma,T,\phi) \le f(T,\phi).$$
\end{prop}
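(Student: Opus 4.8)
The plan is to establish $h(\Sigma,T,\phi)\le F(T,\phi^{B(e,n)})$ for every $n\ge1$ and then take the infimum over $n$. Here $\phi^{B(e,n)}=\bigvee_{g\in B(e,n)}T_g\phi$ is a finite observable $X\to A^{B(e,n)}$ whose associated partition equals $\bphi^n$, so $F(T,\phi^{B(e,n)})=F(T,\bphi^n)$ and hence $\inf_n F(T,\phi^{B(e,n)})=\inf_n F(T,\bphi^n)=f(T,\phi)$. The key is that Theorem \ref{thm:second}, applied to the observable $\phi^{B(e,n)}$ in place of $\phi$, gives
$$F(T,\phi^{B(e,n)})=\inf_{\epsilon>0}\lim_{i\to\infty}\frac1i\log\Big(\bE\big[\#\{\psi':\{1,\ldots,i\}\to A^{B(e,n)}~:~d^*_{\sigma_i}(\phi^{B(e,n)},\psi')\le\epsilon\}\big]\Big),$$
so it suffices, for one well-chosen finite $H\subset G$, to inject the set of $\psi$ that approximate $\phi$ on $H$ into the set of $\psi'$ that $d^*$-approximate $\phi^{B(e,n)}$, compatibly with the random choice of $\sigma_i$.

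First I would take $H:=B(e,n+1)$ and the map $\psi\mapsto\psi^{B(e,n)}=\bigvee_{g\in B(e,n)}\sigma_i(g)\psi$, for $\psi:\{1,\ldots,i\}\to A$. This map is injective because $e\in B(e,n)$, so the $e$-coordinate of $\psi^{B(e,n)}$ is $\psi$ itself. Next I would prove the distance comparison $d^*_{\sigma_i}(\phi^{B(e,n)},\psi^{B(e,n)})\le r\,d^H_{\sigma_i}(\phi,\psi)$. For each generator $s_j$, unwinding the definitions (using that $T$ is a $G$-action and $\sigma_i$ a homomorphism) shows that both $(\phi^{B(e,n)})^{\{e,s_j\}}$ and $(\psi^{B(e,n)})^{\{e,s_j\}}$ record exactly the coordinates indexed by $H_j:=B(e,n)\cup B(e,n)s_j$, possibly repeating some coordinates along a diagonal, which does not change the $\ell^1$-distance of the two pushforward measures; hence $d^{\{e,s_j\}}_{\sigma_i}(\phi^{B(e,n)},\psi^{B(e,n)})=d^{H_j}_{\sigma_i}(\phi,\psi)$. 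Since $H_j\subseteq B(e,n+1)=H$, the measures $\phi^{H_j}_*\mu$ and $\psi^{H_j}_*\zeta_i$ are coordinate projections of $\phi^{H}_*\mu$ and $\psi^{H}_*\zeta_i$, and the $\ell^1$-distance of probability measures does not increase under pushforward, so $d^{H_j}_{\sigma_i}(\phi,\psi)\le d^{H}_{\sigma_i}(\phi,\psi)$; summing over $j=1,\ldots,r$ gives the bound.

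Combining the two facts, $d^H_{\sigma_i}(\phi,\psi)\le\epsilon/r$ forces $d^*_{\sigma_i}(\phi^{B(e,n)},\psi^{B(e,n)})\le\epsilon$, so
$$\bE\big[\#\{\psi:\{1,\ldots,i\}\to A~:~d^H_{\sigma_i}(\phi,\psi)\le\epsilon/r\}\big]\le\bE\big[\#\{\psi':\{1,\ldots,i\}\to A^{B(e,n)}~:~d^*_{\sigma_i}(\phi^{B(e,n)},\psi')\le\epsilon\}\big].$$
Applying $\tfrac1i\log(\cdot)$, then $\limsup_{i\to\infty}$, then $\inf_{\epsilon>0}$ (as $\epsilon\to0^+$, the quantity $\epsilon/r$ sweeps all of $(0,\infty)$): the left side is $\ge h(\Sigma,T,\phi)$ since the definition of $h(\Sigma,T,\phi)$ carries an extra infimum over finite subsets of $G$ and we have merely fixed the subset $H$, while the right side equals $F(T,\phi^{B(e,n)})$ by Theorem \ref{thm:second}. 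Thus $h(\Sigma,T,\phi)\le F(T,\phi^{B(e,n)})$ for all $n$, and $\inf_n$ gives $h(\Sigma,T,\phi)\le f(T,\phi)$.

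I expect the only real friction to be bookkeeping, not analysis: reconciling the $f$-invariant's convention for $\bphi^n$ (a join over a ball via $T_g^{-1}$ on partitions) with the $\Sigma$-entropy join convention (via $T_h$ on observables) and with the parallel symbolic join for $\psi^{B(e,n)}$, and tracking the diagonal identification in the computation of $(\phi^{B(e,n)})^{\{e,s_j\}}$ and $(\psi^{B(e,n)})^{\{e,s_j\}}$ — these are the same calculation on the $X$-side and the $\{1,\ldots,i\}$-side, made to match by the homomorphism property of $\sigma_i$; in the free-semigroup case one also uses that each $\sigma_i(g)$ is a genuine permutation even when $T_g$ is not invertible. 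All analytic weight is carried by Theorem \ref{thm:second}, which is already proved.
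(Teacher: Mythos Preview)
Your argument is correct, and it takes a genuinely different route from the paper's. The paper first shows the elementary inequality $d^*_{\sigma_n}(\phi,\psi)\le r\,d^{\{s_1,\ldots,s_r\}}_{\sigma_n}(\phi,\psi)$, which immediately yields $h(\Sigma,T,\phi)\le F(T,\phi)$; it then invokes Theorem~\ref{thm:Bo08b} (the main result of [Bo08b]) to conclude $h(\Sigma,T,\phi)=h(\Sigma,T,\phi^{B(e,n)})\le F(T,\phi^{B(e,n)})$ for every $n$. You instead bypass Theorem~\ref{thm:Bo08b} entirely: by choosing $H=B(e,n+1)$ in the very definition of $h(\Sigma,T,\phi)$ and using the injective map $\psi\mapsto\psi^{B(e,n)}$ together with the coordinate-diagonal identification $d^{\{e,s_j\}}_{\sigma_i}(\phi^{B(e,n)},\psi^{B(e,n)})=d^{H_j}_{\sigma_i}(\phi,\psi)$, you obtain $h(\Sigma,T,\phi)\le F(T,\phi^{B(e,n)})$ directly. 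Your approach is more self-contained, trading a short proof that leans on a deep external theorem for a slightly longer but elementary one that uses only Theorem~\ref{thm:second}; the paper's approach is terser but imports substantial machinery. The bookkeeping you flag (the $T_g$ versus $T_g^{-1}$ convention and the diagonal identification, which works precisely because $\sigma_i$ is a homomorphism so both pushforwards are supported on the same diagonal) is exactly where the care is needed, and you have handled it correctly.
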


\begin{proof}
 Let $S=\{s_1,\ldots,s_r\}$. Observe that for any $n$, if $\psi:\{1,\ldots,n\} \to A$ is any function then $d^S_{\sigma_n}(\phi,\psi)r \ge d^*_{\sigma_n}(\phi,\psi)$. So if $\epsilon>0$ then
$$\bE\Big[\big| \{\psi:\{1,\ldots,n\}\to A~:~d^S_{\sigma_n}(\phi,\psi)\le \epsilon\}\big|\Big]
\le\bE\Big[\big| \{\psi:\{1,\ldots,n\}\to A~:~d^*_{\sigma_n}(\phi,\psi)\le r\epsilon\}\big|\Big].$$

This implies $h(\Sigma,T,\phi) \le F(T,\phi).$ 


Recall that $B(e,n)$ denotes the ball of radius $n$ in $G$ and $f(T,\phi) = \inf_n F(T,\phi^{B(e,n)})$. Thus we have $\inf_n h(\Sigma,T,\phi^{B(e,n)}) \le f(T,\phi)$. Since $\phi$ and $\phi^{B(e,n)}$ generate the same $\sigma$-algebra, theorem \ref{thm:Bo08b} implies that $h(\Sigma,T,\phi)=h(\Sigma,T,\phi^{B(e,n)})$ for all $n$. This implies the proposition.
\end{proof}

\begin{prop}
 $$h(\Sigma,T,\phi) \ge f(T,\phi).$$
\end{prop}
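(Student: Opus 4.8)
The plan is to establish $h(\Sigma,T,\phi) \ge f(T,\phi)$ by showing that for every finite $H \subset G$ and every $\epsilon > 0$, the exponential growth rate of the count of $\epsilon$-approximating observables $\psi$ is at least $f(T,\phi)$. By Theorem~\ref{thm:Bo08b}, it suffices to prove this with $\phi$ replaced by $\phi^{B(e,n)}$ for arbitrary $n$, since $\phi$ and $\phi^{B(e,n)}$ generate the same $\sigma$-algebra and hence have the same $\Sigma$-entropy rate; then taking the infimum over $n$ will pick up $\inf_n F(T,\phi^{B(e,n)}) = f(T,\phi)$ on the right-hand side. So the core reduction is: for a fixed finite observable $\psi$-alphabet and a fixed finite $H$, lower-bound the count.

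First I would argue that for any finite $H \subset G$, the constraint $d^H_{\sigma_n}(\phi,\psi) \le \epsilon$ is \emph{weaker} (for appropriately comparable $\epsilon$) than the constraint $d^*_{\sigma_n}(\phi,\psi) \le \epsilon'$ used in Theorem~\ref{thm:second} — or rather, that the count in Definition of $h(\Sigma,T,\phi)$ dominates a count controlled by $d^*$. Concretely, for the generating sets $\{e,s_i\}$ one has $d^{\{e,s_i\}}_{\sigma_n} \le d^H_{\sigma_n}$ whenever $\{e, s_i\} \subseteq H$ (more precisely, the total variation of a pushforward on a coarser product factor is bounded by that on the finer one). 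Hence if $H \supseteq S \cup \{e\}$ then $\{\psi : d^H_{\sigma_n}(\phi,\psi) \le \epsilon\} \subseteq \{\psi : d^*_{\sigma_n}(\phi,\psi) \le r\epsilon\}$ — wait, that is the wrong containment direction for a lower bound. So instead the argument must run: replace $\phi$ by $\phi^{B(e,n)}$, and observe that $d^*_{\sigma_m}(\phi^{B(e,n)}, \psi) \le \epsilon$ is a condition of the type handled by Theorem~\ref{thm:second} applied to the observable $\phi^{B(e,n)}$ (which is again a finite observable). Theorem~\ref{thm:second} then gives that the growth rate of the $d^*$-count for $\phi^{B(e,n)}$ equals $F(T, \phi^{B(e,n)})$. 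The point is that any $\psi$ with $d^*_{\sigma_m}(\phi^{B(e,n)},\psi)$ small in particular has $d^H_{\sigma_m}(\phi^{B(e,n)}, \psi)$ small for $H = S \cup \{e\}$, but we need the reverse to get a lower bound on the $h$-count from the $d^*$-count.

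The cleanest route, which I would pursue, is the following. Fix a finite $H \subset G$; since $G$ is free we may take $H \subseteq B(e,n)$ for some $n$. Then $\phi^H$ is a coarsening (a coordinate projection) of $\phi^{B(e,n)}$, so $d^{\{e,s_i\}}_{\sigma_m}(\phi^{B(e,n)}, \cdot)$ controls $d^H_{\sigma_m}(\phi, \cdot)$ up to passing through the factor map. More usefully: a $\psi': \{1,\dots,m\} \to A^{B(e,n)}$ that $d^*$-approximates $\phi^{B(e,n)}$ yields, by post-composing with the projection to the $e$-coordinate, an observable $\psi: \{1,\dots,m\} \to A$; and one checks that the map $\psi' \mapsto \psi$ is such that $d^H_{\sigma_m}(\phi, \psi)$ is small. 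The number of such $\psi'$ grows like $e^{F(T,\phi^{B(e,n)}) m}$ by Theorem~\ref{thm:second}, while the fibers of $\psi' \mapsto \psi$ are controlled — here one must ensure the map is not too many-to-one, which should follow because a $d^*$-good $\psi'$ is essentially determined on most of $\{1,\dots,m\}$ by its $e$-coordinate together with the (fixed, from $\sigma_m$) shift structure, or alternatively one absorbs the fiber count into subexponential error. This yields $h(\Sigma,T,\phi) \ge h(\Sigma,T,\phi^H\text{-count rate}) \ge F(T,\phi^{B(e,n)})$ for the chosen $n$, and since $H$ (hence $n$) ranges over all finite subsets, $h(\Sigma,T,\phi) \ge \inf_n F(T,\phi^{B(e,n)}) = f(T,\phi)$.

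The main obstacle I anticipate is the fiber-counting step: controlling how many observables $\psi': \{1,\dots,m\} \to A^{B(e,n)}$ map to a given $\psi$ while all being $d^*$-approximants of $\phi^{B(e,n)}$, and ensuring this multiplicity is subexponential in $m$ (so it disappears after dividing by $m$ and taking $\limsup$). A clean way to sidestep this is to go the other direction entirely: rather than projecting $\psi'$ down, lift each good $\psi: \{1,\dots,m\} \to A$ to the canonical $\psi': j \mapsto (\sigma_m(g)\psi(j))_{g \in B(e,n)}$ and show that when $\psi$ is a $d^*$-approximant of $\phi$, this $\psi'$ is a $d^*$-approximant of $\phi^{B(e,n)}$ — but that requires $\psi$ to approximate $\phi$ on the \emph{larger} window $B(e,n)\cdot S$, i.e. it requires controlling $d^{H'}_{\sigma_m}$ for $H'$ larger than $\{e,s_i\}$, which is exactly the kind of multi-scale bookkeeping that Theorem~\ref{thm:Bo08b} is designed to make harmless. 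So the real content is organizational: carefully choose the finite set $H'$ and the window $n$ so that (i) the lift/projection correspondence between $A$-valued and $A^{B(e,n)}$-valued approximants is genuinely close to a bijection on the relevant sets, and (ii) the growth rate extracted is $F(T,\phi^{B(e,n)})$, after which $\inf_n$ and Theorem~\ref{thm:Bo08b} finish the job.
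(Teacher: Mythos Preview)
Your plan is essentially the paper's own argument: project each $d^*$-approximant $\psi':\{1,\ldots,m\}\to A^{B(e,n)}$ of $\phi^{B(e,n)}$ to $\psi=\pi_e\circ\psi'$, verify that $\psi$ is a $d^{B(e,n)}$-approximant of $\phi$, and bound the fibers of this projection subexponentially; taking $\inf_n$ then gives $h(\Sigma,T,\phi)\ge\inf_n F(T,\phi^{B(e,n)})=f(T,\phi)$. Your intuition for the fiber bound is exactly right and is the heart of the paper's proof: if $d^*_\sigma(\phi^{B(e,n)},\psi')\le\epsilon$ then on a set of density at least $1-|B(e,n)|\epsilon$ one has $\psi'(i)=(\pi_e\circ\psi')^{B(e,n)}(i)$, so $\psi'$ is determined by $\psi$ and $\sigma$ except on a set of size $\le |B(e,n)|\epsilon\, m$, yielding at most $\binom{m}{\lfloor |B(e,n)|\epsilon\, m\rfloor}\,|A|^{|B(e,n)|^2\epsilon m}$ preimages---subexponential once $\epsilon\to 0$.
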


\begin{proof}
Given a finite set $K \subset G$, define
$$h(\Sigma,T,\phi;K):=\inf_{\epsilon>0} \limsup_{n\to\infty} \frac{1}{n}\log\Big(\bE\Big[\big| \{\psi:\{1,\ldots,n\}\to A~:~d^K_{\sigma_n}(\phi,\psi)\le \epsilon\}\big|\Big]\Big).$$
{\bf Claim 1}. $h\big(\Sigma,T,\phi;B(e,m)\big) \ge F\big(T,\phi^{B(e,m)}\big)$ for all $m\ge 0$. 

Note that if $K \subset L$ then $h(\Sigma,T,\phi;K) \ge h(\Sigma,T,\phi;L)$. It follows that $h(\Sigma,T,\phi) = \inf_m h(\Sigma,T,\phi;B(e,m))$. Thus claim 1 implies the proposition.

To simplify notation, let $\B$ denote $B(e,m)$. To prove claim 1, for $m,n,\epsilon\ge 0$, let $P(m,n,\epsilon)$ be the set of all pairs $(\sigma,\omega)$ with $\sigma:G \to \sym(n)$ a homomorphism and $\omega:\{1,\ldots,n\} \to A$ a map such that $d^{\B}_\sigma(\phi,\omega)\le \epsilon$. Since there are $n!^r$ homomorphisms from $G$ into $\sym(n)$,
\begin{eqnarray}\label{eqn:P}
h(\Sigma,T,\phi;\B) = \inf_{\epsilon>0} \limsup_{n\to\infty} \frac{1}{n}\log \Big(\frac{|P(m,n,\epsilon)|}{n!^r}\Big) .
\end{eqnarray}
Let $Q(m,n,\epsilon)$ be the set of all pairs $(\sigma,\psi)$ with $\sigma:G \to \sym(n)$ a homomorphism and $\psi:\{1,\ldots,n\} \to A^{\B}$ a map such that $d^{*}_\sigma(\phi^{\B},\psi)\le \epsilon$. By theorem \ref{thm:second},
\begin{eqnarray}\label{eqn:Q}
F(T,\phi^{\B}) =  \inf_{\epsilon>0} \limsup_{n\to\infty} \frac{1}{n}\log \Big(\frac{|Q(m,n,\epsilon)|}{n!^r} \Big).
\end{eqnarray}

For $g\in \B$ let $\pi_g:A^{\B} \to A$ denote the projection map $\pi_g( (a_h)_{h\in \B} )=a_g$. For $(\sigma,\psi)\in Q(m,n,\epsilon)$, let $R(\sigma,\psi) = (\sigma, \pi_e \circ \psi)$. Let $H(x):=-x\log(x)-(1-x)\log(1-x)$.

{\bf Claim 2}. If $c=1+|\B|$ then the image of $R$ is contained in $P(m,n,\epsilon c)$.

{\bf Claim 3}. There are constants $C, k >0$ depending only on $m$ such that if $\epsilon<\frac{1}{4|B|}$ then $R$ is at most $C\exp(nk\epsilon + nH(2|\B|\epsilon))$ to 1. I.e., for any $(\sigma,\omega)$ in the image of $R$, $|R^{-1}(\sigma,\omega)| \le C\exp(nk\epsilon +nH(2|\B|\epsilon))$.

Claims 2 and 3 imply
$$ C\exp(kn\epsilon+nH(2|\B|\epsilon))\Big|P\big(m,n,\epsilon c)\big)\Big| \ge  \big| Q(m,n,\epsilon) \big|.$$
Together with equations \ref{eqn:P} and \ref{eqn:Q}, this implies claim 1 and hence the proposition.

Next we prove claim 2. For this purpose, fix a homomorphism $\sigma:G \to \sym(n)$. Observe that for any $x \in X$ and any $t\in \{s_1,\ldots,s_r\}$,
\begin{eqnarray*}
\pi_g\phi^{\B}(x) = \phi(T_gx)= \pi_{gt^{-1}}\phi^{\B}(T_tx) ~\forall g\in \B \cap \B t.
\end{eqnarray*}
Therefore if $i \in \{1,\ldots,n\}$ and for some $g\in \B \cap \B t$, $\psi:\{1,\ldots,n\}\to A^\B$ satisfies 
$$\pi_g\psi(i) \ne  \pi_{gt^{-1}}\psi\big(\sigma(t)i\big)$$
then $\psi\vee \psi^t(i) \ne \phi^{\B} \vee \phi^{\B t}(x)$ for any $x\in X$. 

So, let $\cG$ be the set of all $i\in \{1,\ldots,n\}$ such that for all $t \in \{s_1,\ldots, s_r\}$,
\begin{eqnarray*}
\pi_g\psi(i) &=& \pi_{gt^{-1}}\psi\big(\sigma(t)i\big) ~\forall g\in \B \cap \B t.
\end{eqnarray*}
Thus
$$d^*_\sigma(\phi^{\B},\psi) \ge \frac{|\cG^c|}{n}=\zeta(\cG^c)$$
where $\cG^c$ denotes the complement of $\cG$ and $\zeta$ denotes the uniform probability measure on $\{1,\ldots,n\}$.

Let $\cG_m$ be the set of all $i\in \{1,\ldots,n\}$ such that $\sigma(g)i \in \cG$ for all $g\in \B$. Note 
\begin{eqnarray}\label{note}
\zeta(\cG_m^c) \le \big|\B\big| \zeta(\cG^c) \le \big|\B\big|d^*_\sigma(\phi^{\B},\psi).
\end{eqnarray}

If $i \in \cG_m$ then $\psi(i)=(\pi_e\circ\psi)^{\B}(i)$. Therefore
\begin{eqnarray*}
\sum_{a\in A^{\B}} \Big|\psi_*\zeta(a)-(\pi_e\circ\psi)^{\B}_*\zeta(a)\Big| \le\frac{1}{n}\Big|\big\{i~:~\psi(i)\ne(\pi_e\circ\psi)^{\B}(i)\big\}\Big|\le \zeta(\cG_m^c)\le  |\B|d^*_\sigma\big(\phi^{\B},\psi\big).
\end{eqnarray*}

Suppose $d^*_\sigma(\phi^{\B},\psi)\le \epsilon$. Then
\begin{eqnarray*}
d^{\B}_\sigma(\phi,\pi_e \circ \psi) &=& \sum_{a\in A^{\B}} \big|\phi^{\B}_*\mu(a)-(\pi_e\circ\psi)^{\B}_*\zeta(a)\big|\\
&\le& \sum_{a\in A^{\B}} \big|\phi^{\B}_*\mu(a)-\psi_*\zeta(a)\big|+\big|\psi_*\zeta(a)-(\pi_e\circ\psi)^{\B}_*\zeta(a)\big| \\
&\le& d^*_\sigma(\phi^{\B},\psi)\big(1 + |\B|\big) \le \epsilon\big(1 + |\B|\big).
\end{eqnarray*}
This proves claim 2.

Let $(\sigma,\omega)$ be in the image of $R$. 

{\bf Claim 4}. For every $\psi$ with $R(\sigma,\psi)=(\sigma,\omega)$, there exists a set $L(\psi)\subset \{1,\ldots,n\}$ of cardinality $\lfloor n(1-|\B|\epsilon) \rfloor$ such that $\psi(i)=\omega^B(i)$ for all $i\in L(\psi)$. 

To prove claim 4, observe that, if $\cG_m$ is defined as above, then for all $i\in \cG_m$,  $\psi(i)=\omega^B(i)$. By equation \ref{note},
$$|\cG_m| =n\Big(1-\zeta(\cG_m^c)\Big) \ge n\Big(1-|B|d_\sigma^*\big(\phi^B,\psi\big)\Big) \ge n\big(1-|B|\epsilon\big).$$
So let $L(\psi)$ be any subset of $\cG_m$ with cardinality $\lfloor n(1-|\B|\epsilon) \rfloor$. This proves claim 4.

Next we prove claim 3. Claim 4 implies
\begin{eqnarray}\label{eqn:stirling}
|R^{-1}(\sigma,\omega)| \le |A|^{|\B|\big(n- \lfloor n(1-|\B|\epsilon) \rfloor\big)}{n \choose \big\lfloor n(1-|\B|\epsilon) \big\rfloor}.
\end{eqnarray}

This is because there are ${n \choose \lfloor n(1-|\B|\epsilon) \rfloor}$ sets in $\{1,\ldots,n\}$ with cardinality equal to $\lfloor n(1-|\B|\epsilon) \rfloor$ and for each $i\in \{1,\ldots,n\} -L(\psi)$, there are at most $|A|^{|\B|}$ possible values for $\psi(i)$.

Because $H$ is monotone increasing for $0<x<1/2$ it follows from Stirling's approximation that if $\epsilon<\frac{1}{4|\B|}$ then 
  $${n \choose  \big\lfloor n(1-|\B|\epsilon) \big\rfloor} \le C\exp\Big( n H\big(2|\B|\epsilon\big) \Big)$$
 where $C>0$ is a constant. This and equation \ref{eqn:stirling} now imply claim 3 and hence the proposition.
\end{proof}



\end{document}